\def\NN{{{\mathbb N}}}
\def\RR{{\mathbb R}}
\newcommand{\eps}{\varepsilon}
\newtheorem{theorem}{{Theorem}}[section]
\newtheorem{lemma}[theorem]{{Lemma}}
\newtheorem{proposition}[theorem]{{Proposition}}
\newtheorem{corollary}[theorem]{{Corollary}}
\theoremstyle{definition}
\newtheorem{remark}[theorem]{{Remark}}
\numberwithin{equation}{section}
\begin{document}

\title[A Triviality Result for Semilinear Parabolic Equations]{A Triviality Result for Semilinear Parabolic Equations}

\author{Daniele Castorina}
\address{Daniele Castorina\\
	Dipartimento di Matematica e Applicazioni,
	Universit\`a di Napoli, Via Cintia, Monte S. Angelo 80126 Napoli,
	Italy}
\email{daniele.castorina@unina.it}

\author{Giovanni Catino}
\address{Giovanni Catino\\
	Dipartimento di Matematica,
	Politecnico di Milano, Piazza Leonardo da Vinci 32, 20133, Milano,
	Italy}
\email{giovanni.catino@polimi.it}

\author{Carlo Mantegazza}
\address{Carlo Mantegazza\\
	Dipartimento di Matematica e Applicazioni,
	Universit\`a di Napoli, Via Cintia, Monte S. Angelo 80126 Napoli,
	Italy}
\email{c.mantegazza@sns.it}
\date{\today}

\begin{abstract}  We show a triviality result for  ``pointwise'' monotone in time, bounded ``eternal'' solutions of the semilinear heat equation 
\begin{equation*}
u_{t}=\Delta u + |u|^{p} 
\end{equation*}
on complete Riemannian manifolds of dimension $n \geq 5$ with nonnegative Ricci tensor, when $p$ is smaller than the critical Sobolev exponent $\frac{n+2}{n-2}$.
\end{abstract}

\subjclass[2010]{35K05, 58J35}
%\keywords{???, ???}

\maketitle

\section{Introduction}

Let $(M,g)$ be a complete Riemannian manifold with nonnegative Ricci tensor. We are going to consider bounded ``eternal'' classical solutions $u\in C^{2,1}(M\times\RR)$ of the semilinear heat equation 
\begin{equation}\label{eq-pde}
u_{t}=\Delta u + |u|^{p} 
\end{equation}
for $p>1$ and $T\leq +\infty$, with particular attention to ``triviality'', that is, to conditions forcing the solutions to be identically zero.

A reason for the interest in such eternal (or also ``ancient'') solutions is that they typically arise as blow--up limits when the solutions of semilinear parabolic equations in time intervals as $[0,T)$ develop a singularity at a certain time $T \in \mathbb{R}$, i.e. the solution $u$ becomes unbounded as $t$ goes to $T$. 

In the Euclidean space, it is well known (see~\cite{tal} and~\cite[Proposition~B]{guiniwang}) that noncostant, positive global radial (static, solution of the elliptic problem) solutions on $\mathbb{R}^n \times \mathbb{R}$ exist for the ``critical'' or any ``supercritical'' exponent $p \geq p_S=\frac{n+2}{n-2}$ (this latter is the {\em critical Sobolev exponent}), hence providing a counterexample to triviality in such case. In the non--static (parabolic) situation, while triviality of eternal {\em radial} (parabolic) positive solutions can be shown in the range of subcritical exponents $1<p<p_S$, the same expected result for general (not necessarily radial) solutions is known only in the range $1<p< \frac{n(n+2)}{(n-1)^2}$ (see Remark~\ref{rem-bv} below). Indeed, such triviality when $\frac{n(n+2)}{(n-1)^2} \leq p< p_S$ is a quite long standing open problem  (see~\cite{pqs1,pqs2}), which might have been finally solved by Quittner in the preprint~\cite{quittner2}, appeared during the redaction of this work.

We mention that these triviality issues for eternal (and also ancient) solutions have been recently addressed and partially extended to the cases of compact or bounded geometry Riemannian manifolds, by the first and third author in~\cite{cama2, cama1}.

Our aim in this paper is to show the following triviality theorem for eternal solutions, ``pointwise'' monotone in time (mentioned without proof in~\cite[Remark~4.3\,(b)]{pqs2} for $M=\RR^n$).

\begin{theorem}\label{t-triv}
Let $(M,g)$ be a complete Riemannian manifold of dimension $n \geq 5$ with nonnegative Ricci tensor. Let $u\in C^{2,1}(M\times\RR)$ be a {\em bounded} eternal solution of equation~\eqref{eq-pde} with $u_t \geq 0$ and $1<p<p_S$. Then, $u\equiv 0$.\end{theorem}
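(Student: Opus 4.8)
The plan is to pass to the time-limits of $u$, which are steady states, and then to establish a Liouville property for bounded solutions of the associated elliptic equation by a Bidaut--V\'eron--type integral estimate exploiting $\mathrm{Ric}\ge 0$ through the Bochner formula; the time-monotonicity enters precisely to reach the full subcritical range.

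\emph{A priori bounds and reduction to steady states.} Since $u$ is bounded, $|u|^p$ is bounded, and interior parabolic estimates --- applied uniformly on balls, which is legitimate because $\mathrm{Ric}\ge 0$ gives $|B_R(x)|\le\omega_n R^n$ and Moser/Li--Yau estimates with constants depending only on $n$ --- yield uniform bounds for $u_t$, $\nabla u$ and $\nabla^2 u$ on $M\times\RR$. As $u_t\ge 0$ and $u$ is bounded, the pointwise limits $u_{\pm}(x):=\lim_{t\to\pm\infty}u(x,t)$ exist and $u_-\le u(\cdot,t)\le u_+$; moreover $\int_{\RR}u_t(x,t)\,dt\le 2\|u\|_\infty<\infty$, so $u_t(x,t_k)\to 0$ along suitable sequences $t_k\to\pm\infty$, and the uniform $C^2$ bounds force $u(\cdot,t_k)\to u_\pm$ in $C^2_{\mathrm{loc}}(M)$, with $u_\pm$ bounded solutions of $\Delta u_\pm+|u_\pm|^p=0$ on $M$. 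Hence it suffices to prove $u_\pm\equiv 0$, i.e.\ a Liouville theorem for bounded solutions of the elliptic equation; by the sandwich $u_-\le u\le u_+$ this gives $u\equiv 0$.

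\emph{The integral estimate.} For a bounded solution $w$ of $\Delta w=-|w|^p$, multiply by $|w|^{q-1}w\,\varphi^{2m}$ and integrate by parts, where $\varphi$ is a cutoff equal to $1$ on $B_R$, supported in $B_{2R}$, with $|\nabla\varphi|\le C/R$, $|\nabla^2\varphi|\le C/R^2$; this bounds $\int_M|w|^{p+q}\varphi^{2m}$ in terms of $\int_M|w|^{q-1}|\nabla w|^2\varphi^{2m}$ plus cutoff remainders. The gradient term is in turn controlled by testing the Bochner inequality
\[
\tfrac12\Delta|\nabla w|^2\ \ge\ |\nabla^2 w|^2+\langle\nabla w,\nabla\Delta w\rangle\ \ge\ \tfrac1n|w|^{2p}-p\,|w|^{p-1}|\nabla w|^2 ,
\]
valid where $w\neq 0$ (the zero set being absorbed, since $|w|^p\in C^1$), against $|w|^\sigma\varphi^{2m}$ and integrating by parts, using $\mathrm{Ric}\ge 0$ to drop the curvature term. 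Since $\mathrm{Ric}\ge 0$ bounds the volume growth, all cutoff remainders are governed by the scaling of $\varphi$ and tend to $0$ as $R\to\infty$, provided the exponents $q,\sigma,m$ are chosen so that the resulting algebraic inequalities are consistent; this consistency is what $1<p<p_S$ and $n\ge 5$ guarantee, and one concludes $\nabla w\equiv 0$, hence $w\equiv 0$.

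\emph{Role of monotonicity and the main obstacle.} Carried out purely elliptically on a general manifold with $\mathrm{Ric}\ge 0$, the above closes only for $p$ below an exponent strictly smaller than $p_S$ (of the type $\tfrac{n(n+2)}{(n-1)^2}$); the extra input needed to bridge the whole range $(1,p_S)$ must be the hypothesis $u_t\ge 0$, fed --- as a favourably-signed term --- into the parabolic analogue of the computation above, run on $M\times\RR$ with $\partial_t-\Delta$ in place of $-\Delta$ and the parabolic Bochner identity $(\partial_t-\Delta)|\nabla u|^2=-2|\nabla^2 u|^2-2\,\mathrm{Ric}(\nabla u,\nabla u)+2p\,|u|^{p-1}\mathrm{sgn}(u)\,|\nabla u|^2$. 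The main difficulty is exactly to organize these parabolic integral estimates so that the $u_t$-term has the right sign to absorb the ``bad'' Bochner term $p|u|^{p-1}|\nabla u|^2$ throughout the subcritical range; the non-smoothness of $|u|^p$ across $\{u=0\}$ and the presence of sign-changing solutions are further technical points to dispatch, whereas the noncompactness of $M$ is routine once $\mathrm{Ric}\ge 0$ is available.
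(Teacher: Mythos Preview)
Your reduction to the time--limits $u_\pm$ is correct as far as it goes, but it discards precisely the hypothesis you need. Once you pass to a steady state $w$ with $\Delta w+|w|^p=0$, the information $u_t\ge 0$ is gone, and you are left with the bare elliptic Liouville problem on a manifold with $\mathrm{Ric}\ge 0$. You yourself observe that the Bidaut--V\'eron/Bochner computation then closes only for $p<\tfrac{n(n+2)}{(n-1)^2}$; for $\tfrac{n(n+2)}{(n-1)^2}\le p<p_S$ your third paragraph is a description of the obstacle, not a proof that overcomes it. In particular, the sentence ``the $u_t$--term has the right sign to absorb the bad Bochner term $p|u|^{p-1}|\nabla u|^2$'' is not how the argument actually works: the sign of $u_t$ does not by itself control that term, and no choice of exponents in a straight parabolic Bochner test makes it do so.

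The paper proceeds quite differently and never passes to steady states. One first shows (via a strong maximum principle argument) that $u$ is either identically zero or strictly positive, and then works directly on the parabolic problem. The Bidaut--V\'eron integral identity produces, among other things, a cross--term
\[
Y=\int \zeta^r u^{m-1}|\nabla u|^2\,u_t,
\]
and it is this term, not the Bochner term, that blocks the full subcritical range. The key new ingredient is a Hamilton--type pointwise gradient estimate $|\nabla u|\le K\,u^{1-\delta}$ (valid for positive bounded solutions on $\mathrm{Ric}\ge 0$ manifolds), which converts $Y$ into $K^2\int \zeta^r u^{m+1-2\delta}u_t$. Now $u_t\ge 0$ means $u_t=u_t^+$, and this last integral telescopes in time to cutoff remainders of the right order. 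This yields the integral decay $\int_{Q(\rho)}u^{2p+m}\le C\rho^{2-\frac{2p+m}{p-1+\delta}}\mu(B_\rho)$ for a suitable $m>-\tfrac{2}{n-2}$. One then feeds this into a weak parabolic Harnack/Moser estimate (Aronson--Serrin, extended to $\mathrm{Ric}\ge 0$ by Saloff--Coste) to obtain a pointwise bound $u\le C\rho^\sigma$ with $\sigma<0$, and sends $\rho\to\infty$. The restriction $n\ge 5$ enters only at the end, in choosing an auxiliary parameter so that the integrability exponent in Harnack exceeds $\tfrac{n+2}{2}$ while a second algebraic constraint (ensuring $\sigma<0$) is simultaneously met; your sketch does not account for this.
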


\begin{remark}\label{rem-bv} We observe that, essentially with the same proof of~\cite{bi-ve}, we can prove the following result, extending to manifolds the analogous one in $\RR^n$.

\bigskip

{\em 
\noindent Let $(M,g)$ be a complete Riemannian manifold of dimension $n \geq 3$ with nonnegative Ricci tensor. Let $u\in C^{2,1}(M\times\RR)$ be an eternal solution of equation~\eqref{eq-pde} with 
$$
1<p<\frac{n(n+2)}{(n-1)^2}.
$$
Then, $u\equiv 0$.
}

\bigskip

Indeed, as observed in Remark~\ref{r-bv}, Lemma~3.1 in~\cite{bi-ve-ra} and thus also Lemma~3 and Lemma~4 in~\cite{bi-ve}, hold true on any Riemannian manifolds with nonnegative Ricci tensor (since are based only on Bochner inequality). In particular, using good cutoff functions that can be constructed on manifolds with nonnegative Ricci tensor (see Remark~\ref{r-cutoff}), from Lemma~4 in~\cite{bi-ve}, given $(x,t)\in M\times \RR$, for every $\rho>0$ and every 
$$
1<p<\frac{n(n+2)}{(n-1)^2},
$$ 
there exists a constant $C=C(n,p)$ such that  
$$
\int_{B(x,\rho)\times(t-\rho^{2},t+\rho^{2})} u^{2p} \leq C\rho^{2-\frac{4p}{p-1}}\mu(B({x},\rho)),
$$
where $B(x,\rho)\subseteq M$ is the metric ball with center $x$ and radius $\rho$ and $\mu$ the canonical Riemannian measure of $(M,g)$. By Bishop--Gromov  inequality (see~\cite{gahula}), we then obtain
$$
\int_{B(x,\rho)\times(t-\rho^{2},t+\rho^{2})} u^{2p} \leq C\rho^{2-\frac{4p}{p-1}}\mu(B({x},\rho))\leq C \rho^{n+2-\frac{4p}{p-1}}.
$$
Since $p<p_S$, letting $\rho\to+\infty$ we conclude that $u=0$ in the whole $M\times \RR$.
\end{remark}

\section{Integral Estimates}\label{grdest}

The following Hamilton--type gradient estimate can be shown analogously to Lemma~3.1 in~\cite{cama2} and will be crucial in the proof of the integral estimate in Proposition~\ref{p-lemma3}.

\begin{lemma}\label{p-grad}
Let $u\in C^{2,1}(\Omega\times(0,T))$ be a positive bounded solution of equation~\eqref{eq-pde} with $0<u\leq D$ and let $\rho>0$ such that $\overline{B}(\overline{x},2\rho)\times[\overline{t}-4\rho^2,\overline{t}+4\rho^2]\subseteq\Omega\times(0,T)$. Then there exists a constant $C=C(n,p)>0$, such that
$$
\frac{|\nabla u|}{u}\leq C\left(\frac{1}{\rho}+\sqrt{p D^{p-1}}\right)\left(1+\log\frac{D}{u}\right)
$$ 
in $Q=B(\overline{x},\rho)\times(\overline{t}-\rho^2,\overline{t}+\rho^2)$. In particular, for every $\delta>0$,  there exists a constant $C=C(n,p,\delta)>0$, such that
$$
|\nabla u|\leq C\left(\frac{1}{\rho}+\sqrt{p D^{p-1}}\right) u^{1-\delta}
$$
in $Q$.
\end{lemma}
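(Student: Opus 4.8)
The plan is to follow the by-now classical Hamilton--Souplet--Zhang maximum principle strategy, exactly as in Lemma~3.1 of~\cite{cama2}. First I would set $v:=\log(D/u)$, which is well defined and nonnegative on $\Omega\times(0,T)$ because $0<u\le D$. Since $\nabla u=-u\,\nabla v$ we have $|\nabla u|/u=|\nabla v|$, so the first estimate is equivalent to
\begin{equation*}
|\nabla v|\le C\bigl(\tfrac{1}{\rho}+\sqrt{pD^{p-1}}\bigr)(1+v)\qquad\text{on }Q .
\end{equation*}
A direct computation turns equation~\eqref{eq-pde} into
\begin{equation*}
v_t=\Delta v-|\nabla v|^2-u^{p-1},\qquad 0<u^{p-1}=D^{p-1}e^{-(p-1)v}\le D^{p-1},
\end{equation*}
and moreover $|\nabla(u^{p-1})|=(p-1)\,u^{p-1}|\nabla v|\le(p-1)D^{p-1}|\nabla v|$; this is the mechanism through which the reaction term will eventually contribute the size $\sqrt{pD^{p-1}}$ in the estimate.

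Next I would introduce the auxiliary function $w:=\dfrac{|\nabla v|^2}{(1+v)^2}$ and prove that $w\le C\bigl(\tfrac{1}{\rho^2}+pD^{p-1}\bigr)$ on $Q$ by a localized maximum principle; the first estimate then follows by taking square roots. The heart of the argument is a pointwise differential inequality for $w$. Combining Bochner's formula
\begin{equation*}
\Delta|\nabla v|^2=2|\nabla^2 v|^2+2\langle\nabla v,\nabla\Delta v\rangle+2\,\mathrm{Ric}(\nabla v,\nabla v)
\end{equation*}
with the hypothesis $\mathrm{Ric}\ge 0$ (which is used here for the first time), with the equation for $v$, and with the refined Kato-type inequality $|\nabla^2 v|^2\ge\bigl|\nabla|\nabla v|\bigr|^2$ in order to absorb the cross terms, one arrives at an inequality of the schematic form
\begin{equation*}
(\Delta-\partial_t)w\ \ge\ \frac{2v}{1+v}\,\langle\nabla v,\nabla w\rangle+2(1+v)\,w^2-C\,\frac{|\nabla v|}{(1+v)^2}\,w-C\,(p-1)D^{p-1}\,w ,
\end{equation*}
whose decisive feature is the self-improving term $2(1+v)w^2$.

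I would then localize by a cutoff $\phi$ with $0\le\phi\le1$, $\phi\equiv1$ on $Q$, $\operatorname{supp}\phi\subseteq B(\overline{x},2\rho)\times[\overline{t}-4\rho^2,\overline{t}+4\rho^2]$ and $|\nabla\phi|^2/\phi+|\Delta\phi|+|\partial_t\phi|\le C/\rho^2$; such functions exist on manifolds with $\mathrm{Ric}\ge 0$ via the Laplacian comparison theorem (see Remark~\ref{r-cutoff}). Applying the maximum principle to $\phi w$ on the compact parabolic cylinder $\overline{B}(\overline{x},2\rho)\times[\overline{t}-4\rho^2,\overline{t}+4\rho^2]$: at a point $(x_0,t_0)$ realizing a positive maximum (necessarily with $x_0$ interior and $t_0>\overline{t}-4\rho^2$, since $\phi$ vanishes on the remainder of the parabolic boundary) one has $\nabla(\phi w)=0$, $\Delta(\phi w)\le 0$ and $\partial_t(\phi w)\ge 0$, hence $(\Delta-\partial_t)(\phi w)\le 0$ there. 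Expanding $(\Delta-\partial_t)(\phi w)=\phi\,(\Delta-\partial_t)w+w\,(\Delta-\partial_t)\phi+2\langle\nabla\phi,\nabla w\rangle$, using $\nabla w=-(w/\phi)\nabla\phi$ at $(x_0,t_0)$, inserting the differential inequality above together with the cutoff bounds, and absorbing every remaining term by Young's inequality, one gets $(\phi w)(x_0,t_0)\le C\bigl(\tfrac{1}{\rho^2}+(p-1)D^{p-1}\bigr)$, whence $w\le C\bigl(\tfrac{1}{\rho^2}+pD^{p-1}\bigr)$ on $Q$, as wanted. Finally, the ``in particular'' statement follows from the elementary inequality $1+\log s\le C_\delta\,s^\delta$ for $s\ge1$: applying it with $s=D/u$ and multiplying the first estimate by $u$ yields $|\nabla u|\le C\bigl(\tfrac{1}{\rho}+\sqrt{pD^{p-1}}\bigr)u^{1-\delta}$ on $Q$, with $C$ now depending also on $\delta$.

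I expect the only genuine obstacle to be the bookkeeping in the differential inequality for $w$: carefully expanding $(\Delta-\partial_t)\bigl(|\nabla v|^2/(1+v)^2\bigr)$, correctly invoking Bochner, isolating the self-improving $w^2$ term, and absorbing each cross term as well as each contribution of the nonlinearity by Young's inequality without degrading the dependence on $\rho$ and $D$ in the final constant. The curvature assumption enters only twice: to discard the Ricci term in Bochner's formula, and to produce the good cutoff function $\phi$.
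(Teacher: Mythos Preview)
Your proposal is correct and follows precisely the approach the paper indicates: the paper does not give a detailed proof of this lemma but simply states that it ``can be shown analogously to Lemma~3.1 in~\cite{cama2}'', and the Hamilton--Souplet--Zhang maximum-principle argument you outline (setting $v=\log(D/u)$, studying $w=|\nabla v|^2/(1+v)^2$ via Bochner's formula, localizing with a cutoff and applying the maximum principle) is exactly that lemma. One small remark: the statement here is set in Euclidean space (where $\mathrm{Ric}\equiv 0$, so the curvature hypothesis is vacuous), and the extension to manifolds with nonnegative Ricci tensor is recorded separately in Remark~\ref{r-g}; also, in deriving the ``in particular'' part your constant picks up a harmless factor $D^\delta$, which in the paper is tacitly absorbed since $D$ is fixed throughout.
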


To keep track of the dependencies of the constants, we define
\begin{equation}\label{gestK}
K= C\left(\frac{1}{\rho}+\sqrt{p D^{p-1}}\right)
\end{equation}
where $C=C(n,p,\delta)>0$ is given in the previous proposition. In particular, we have the gradient estimate
\begin{equation}\label{gest}
|\nabla u|\leq K u^{1-\delta}.
\end{equation}
Notice that $K$ actually depends only on $n,p,\delta,D$ and $\rho_0$, if $\rho$ is larger than some fixed constant $\rho_0>0$.

\begin{remark}\label{r-g}
The same result also holds on complete Riemannian manifolds with nonnegative Ricci tensor (see~\cite{cama2}).
\end{remark}

We recall the following technical lemma~\cite[Lemma~3.1]{bi-ve-ra}.

\begin{lemma}\label{l-lemma2}
Let $U$ be an open set of $\RR^{n}$. Then, for any positive function $w\in C^{2}(U)$, any nonnegative $\eta \in C^{2}_{c}(U)$, any real numbers $d,m\in\RR$ such that $d\neq m+2$, the following inequality holds:
\begin{align*}
&\frac{2(n-m)d-(n-1)(m^2+d^2)}{4n}\int_{U}\eta w^{m-2}|\nabla w|^{4}-\frac{n-1}{n}\int_{U}\eta w^m(\Delta w)^{2}\\
&\quad\qquad-\frac{2(n-1)m+(n+2)d}{2n}\int_{U}\eta w^{m-1}|\nabla w|^{2}\Delta w \\
&\leq \frac{m+d}{2}\int_{U}w^{m-1}|\nabla w|^{2}\langle\nabla w, \nabla \eta\rangle + \int_{U}w^m\Delta w \langle\nabla w, \nabla \eta\rangle+\frac12 \int_{U}w^m|\nabla w|^{2}\Delta\eta.
\end{align*}
\end{lemma}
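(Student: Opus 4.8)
The plan is to derive the stated inequality from the Bochner formula applied to a suitable power of $w$, together with the elementary algebraic fact that for a symmetric bilinear form the traceless part controls the full norm from below by a dimensional constant. Concretely, set $v = w^{a}$ for a parameter $a$ to be chosen (so that $a$ encodes the relation between $m$ and $d$), and compute $\Delta |\nabla v|^{2}$ via Bochner: in $\RR^{n}$,
\begin{equation*}
\tfrac12 \Delta |\nabla v|^{2} = |\nabla^{2} v|^{2} + \langle \nabla v, \nabla \Delta v\rangle .
\end{equation*}
The key pointwise inequality is $|\nabla^{2} v|^{2} \geq \frac{1}{n}(\Delta v)^{2} + \frac{n}{n-1}\big| \nabla^{2} v - \frac{\Delta v}{n} g \big|^{2}$ in its weaker form $|\nabla^{2} v|^{2} \geq \frac{1}{n}(\Delta v)^{2}$, but to get the sharp coefficients appearing in the statement one must instead use the refined Cauchy--Schwarz estimate
\begin{equation*}
|\nabla^{2} v|^{2} \;\geq\; \frac{1}{n-1}\Big( (\Delta v)^{2} - \frac{2}{|\nabla v|^{2}}\langle \nabla^{2} v\, \nabla v,\nabla v\rangle \Delta v + \frac{1}{|\nabla v|^{2}}\langle \nabla^{2} v\,\nabla v,\nabla v\rangle^{2} / |\nabla v|^{2}\Big),
\end{equation*}
which is exactly the inequality $\mathrm{tr}(S)^{2}\leq n\,|S|^{2}$ applied to the restriction of the Hessian to the orthogonal complement of $\nabla v$ — this is the content of Lemma~3.1 in~\cite{bi-ve-ra}. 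Substituting $v=w^{a}$, expanding $\nabla v = a w^{a-1}\nabla w$, $\nabla^{2}v = a w^{a-1}\nabla^{2}w + a(a-1)w^{a-2}\nabla w\otimes \nabla w$, and $\langle \nabla^{2}v\,\nabla v,\nabla v\rangle$ accordingly, one rewrites everything in terms of $w$, $|\nabla w|^{2}$, $\Delta w$, and $\langle \nabla^{2}w\,\nabla w,\nabla w\rangle$.

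Next I would multiply the resulting pointwise inequality by the cutoff $\eta$ and integrate over $U$. The term $\int \eta \cdot \tfrac12\Delta|\nabla v|^{2}$ is integrated by parts twice, throwing all derivatives onto $\eta$ and producing precisely the three right--hand side terms of the claimed inequality (after re--expressing powers of $v$ back in terms of $w$ and collecting the exponents into the single exponent $m$, with $m$, $d$ related to $a$ by $m = (a-1)\cdot(\text{something})$ and $d$ the companion index). The term $\int \eta \langle \nabla v,\nabla\Delta v\rangle$ is also integrated by parts; the troublesome factor $\langle \nabla^{2}w\,\nabla w,\nabla w\rangle$ that appears both here and in the Hessian lower bound must be eliminated. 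One does this by noting that
\begin{equation*}
\langle \nabla^{2} w\,\nabla w,\nabla w\rangle = \tfrac12 \langle \nabla |\nabla w|^{2},\nabla w\rangle,
\end{equation*}
integrating by parts once more against $\eta w^{k}$ for the appropriate power $k$, and substituting. After this substitution the $\langle \nabla^{2}w\,\nabla w,\nabla w\rangle$ terms cancel identically (this is the algebraic miracle that forces the specific combination of coefficients $\frac{2(n-m)d-(n-1)(m^{2}+d^{2})}{4n}$, $\frac{n-1}{n}$, $\frac{2(n-1)m+(n+2)d}{2n}$), leaving only integrals of $\eta w^{m-2}|\nabla w|^{4}$, $\eta w^{m}(\Delta w)^{2}$, $\eta w^{m-1}|\nabla w|^{2}\Delta w$ on the left and the three boundary--type terms on the right. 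The hypothesis $d\neq m+2$ is exactly what guarantees the power $a$ is well defined (the change of variables $v=w^{a}$ is non--degenerate) and that no division by zero occurs when solving for the coefficients.

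The main obstacle is bookkeeping: keeping all the exponents and the numerical coefficients straight through several integrations by parts, and verifying that the $\langle \nabla^{2}w\,\nabla w,\nabla w\rangle$ contributions genuinely cancel rather than merely simplify. A secondary point is justifying the integrations by parts: since $w\in C^{2}(U)$ is only $C^{2}$ and the integrand involves $\Delta w$ and fourth powers of $\nabla w$, one works on the open set where $\nabla w\neq 0$ (the unit vector $\nabla w/|\nabla w|$ is then smooth there) and handles the critical set $\{\nabla w =0\}$ by observing that all integrands extend continuously by zero across it because they carry enough positive powers of $|\nabla w|$; the compact support of $\eta$ removes any boundary contribution at $\partial U$. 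No curvature terms enter because we are in $\RR^{n}$, where Bochner has no Ricci term — this is precisely why, as noted in Remark~\ref{r-bv}, the manifold version costs nothing extra under the hypothesis $\mathrm{Ric}\geq 0$, the Ricci term in Bochner being then favorably signed.
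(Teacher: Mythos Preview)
Your strategy is essentially the right one and matches what the paper indicates: the paper does not prove Lemma~\ref{l-lemma2} itself but recalls it from~\cite{bi-ve-ra} and states (Remark~\ref{r-bv}) that the proof rests only on the \emph{simple} Bochner inequality
\[
\tfrac12\Delta|\nabla f|^{2}\;\geq\;\tfrac{1}{n}(\Delta f)^{2}+\langle\nabla f,\nabla\Delta f\rangle .
\]
Your substitution $v=w^{a}$ is exactly the device that makes the simple Bochner sufficient: once you set $f=w^{a}$, multiply by the weight $\eta\,w^{\beta}$ (with $\beta$ chosen so that $\beta+2a-2=m$), and integrate by parts, the two free parameters $(a,\beta)$ produce precisely the two parameters $(m,d)$ in the statement, and the $-\tfrac{n-1}{n}\int\eta w^{m}(\Delta w)^{2}$ term appears from $(\tfrac{1}{n}-1)\int\eta w^{\beta}(\Delta v)^{2}$ after expanding $(\Delta v)^{2}$. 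The non--degeneracy condition $d\neq m+2$ corresponds exactly to $a\neq 0$.

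Where your proposal goes astray is the assertion that ``to get the sharp coefficients\dots one must instead use the refined Cauchy--Schwarz estimate''. This is not so, and it leads you to layer two mechanisms (the power substitution \emph{and} the orthogonal--complement Hessian bound) where one suffices. Moreover, the displayed ``refined'' inequality you wrote is not correct as stated: the identity $|\nabla^{2}v|^{2}=\tfrac{1}{n}(\Delta v)^{2}+|\nabla^{2}v-\tfrac{\Delta v}{n}g|^{2}$ carries no factor $\tfrac{n}{n-1}$, and your second displayed lower bound omits the diagonal term $v_{11}^{2}$ (in a frame with $e_{1}=\nabla v/|\nabla v|$ the correct bound is $|\nabla^{2}v|^{2}\geq v_{11}^{2}+\tfrac{1}{n-1}(\Delta v-v_{11})^{2}$), so the formula you wrote is strictly weaker and would not deliver the stated coefficients.

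In short: keep the substitution $v=w^{a}$, drop the refined Hessian estimate entirely, apply the crude bound $|\nabla^{2}v|^{2}\geq\tfrac{1}{n}(\Delta v)^{2}$, and carry out the integrations by parts you already outlined (including the identity $\langle\nabla^{2}w\,\nabla w,\nabla w\rangle=\tfrac12\langle\nabla|\nabla w|^{2},\nabla w\rangle$, which is indeed what eliminates the mixed Hessian term). That is exactly the argument the paper points to.
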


\begin{remark}\label{r-bv}
The proof is based on the Bochner formula
$$
\frac12 \Delta|\nabla f|^2=|\nabla^2 f|^2+\langle \nabla f,  \nabla \Delta f\rangle \geq \frac{1}{n}(\Delta f)^2+\langle \nabla f, \nabla\Delta f\rangle,
$$
hence, the conclusion of Lemma~\ref{l-lemma2} is also true on Riemannian manifolds with nonnegative Ricci tensor, since this inequality holds in such spaces (see~\cite{petersen1}, for instance).
\end{remark}

From the previous lemma and the pointwise gradient inequality~\eqref{gest}, we can show the following integral estimate which generalizes the one of Lemma~3 in~\cite{bi-ve} (we will use the same notation and line of proof).

\begin{proposition}\label{p-lemma3}
Let $\Omega\subseteq\RR^n$, with $n\geq 3$ and $u\in C^{2,1}(\Omega\times(0,T))$ be a solution of equation~\eqref{eq-pde}, with $1<p<p_S$ and such that $0<u\leq D$. We assume that for some $\rho>0$ we have $\overline{B}(\overline{x},2\rho)\times[\overline{t}-4\rho^2,\overline{t}+4\rho^2]\subseteq\Omega\times(0,T)$ and we let $Q=B(\overline{x},\rho)\times(\overline{t}-\rho^2,\overline{t}+\rho^2)$, with $\rho$ with larger than some $\rho_0>0$. Suppose that $\zeta\in C^{2}_{c}(Q)$ takes values in $[0,1]$ and let $r>4$, $\delta\in(0,1/100)$, then there exist constants $m=m(n,p)>-\frac{2}{n-2}$ and $C=C(n,p,r,\delta,D,\rho_0)>0$ such that
\begin{align}
\int_{Q}\zeta^{r}u^{m}u_{t}^{2}&\,+\int_{Q}\zeta^r u^{m-2}|\nabla u|^{4} + \int_Q \zeta^r u^{m+p-1}|\nabla u|^2 \\
&\leq C\int_{Q}\zeta^{r-4}u^{m+2(1-\delta)}\left(|\nabla \zeta|^{2}+|\zeta_{t}|+(\Delta\zeta)^{2}+|\nabla \zeta|^{4}+\zeta_{t}^{2}\right)\nonumber\\
&\quad+C\int_{Q}\zeta^{r}u^{m+1-2\delta} |u^-_{t}|\label{mainest}
\end{align}
where $u_t^-=u_t\vee 0$. In particular, the constants $m$ and $C$ are independent of $\rho>\rho_0$.
\end{proposition}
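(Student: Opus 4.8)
The plan is to follow the strategy of Lemma~3 in~\cite{bi-ve}, using Lemma~\ref{l-lemma2} with the test function $\eta=\zeta^r$ and carefully matching the coefficients of the resulting integrals against the terms coming from the equation~\eqref{eq-pde}. First, I would apply Lemma~\ref{l-lemma2} to $w=u(\cdot,t)$ at each fixed time $t$ (legitimate since $u(\cdot,t)\in C^2$ and $\zeta^r\in C^2_c$), then integrate in time over the slab $(\overline t-\rho^2,\overline t+\rho^2)$. The key algebraic input is that the ``good'' second-order quantities on the left-hand side of Lemma~\ref{l-lemma2} — namely $\int \eta w^{m-2}|\nabla w|^4$, $\int \eta w^m(\Delta w)^2$ and $\int \eta w^{m-1}|\nabla w|^2\Delta w$ — can be combined into a positive-definite quadratic form in $(|\nabla u|^2/u,\ \Delta u)$ precisely when the discriminant condition on $m$ (equivalently $d$) is met; this is where the restriction $m>-\frac{2}{n-2}$ enters, and it is exactly the condition that, together with $p<p_S$, makes the scheme work. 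I would record the explicit admissible choice of $m=m(n,p)$ (as in~\cite{bi-ve}) and fix $d$ accordingly.

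Next I would use the equation to eliminate $\Delta u$ in favor of $u_t$ and $|u|^p=u^p$ (since $u>0$): substituting $\Delta u=u_t-u^p$ turns the left-hand side quadratic form into an expression containing $\int\eta u^m u_t^2$, $\int \eta u^{m-2}|\nabla u|^4$, the cross term $\int \eta u^{m+p-1}|\nabla u|^2$ (which, up to constants depending only on $n,p$, produces the third good term on the left of~\eqref{mainest}), plus lower-order terms involving $\int\eta u^{m+2p}$, $\int \eta u^{m+p}u_t$ and $\int \eta u^{m-1}|\nabla u|^2 u_t$. The terms with a favorable sign are kept on the left; the term $\int \eta u^{m+p}u_t$ with the wrong sign is the source of the last integral $C\int_Q\zeta^r u^{m+1-2\delta}|u_t^-|$ in the statement, after one uses Young's inequality and the gradient bound to trade powers (see below). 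The term $\int\eta u^{m+2p}$ must be absorbed: here I would invoke the pointwise gradient estimate~\eqref{gest}, $|\nabla u|\le Ku^{1-\delta}$, to bound $u^{m+2p}\le$ (a constant times) $u^{m-2}|\nabla u|^4\cdot u^{2p-2+4\delta-\text{(exponent bookkeeping)}}$ — more precisely one checks that, since $p<p_S$ and $\delta$ is small, the exponent works out so that $\int\eta u^{m+2p}$ is controlled by a small multiple of $\int\eta u^{m-2}|\nabla u|^4$ plus boundary terms, allowing absorption into the left-hand side for $\rho>\rho_0$ (this is precisely where the constant $K$, hence the $D,\rho_0$ dependence, enters $C$).

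Finally, the right-hand side of Lemma~\ref{l-lemma2} contains only first-order derivatives of $u$ paired against $\nabla\zeta^r$ and $\Delta\zeta^r$: terms like $\int u^{m-1}|\nabla u|^2\langle\nabla u,\nabla\zeta^r\rangle$, $\int u^m\Delta u\,\langle\nabla u,\nabla\zeta^r\rangle$ and $\int u^m|\nabla u|^2\Delta\zeta^r$. Using $\nabla\zeta^r=r\zeta^{r-1}\nabla\zeta$ and $\Delta\zeta^r=r\zeta^{r-1}\Delta\zeta+r(r-1)\zeta^{r-2}|\nabla\zeta|^2$, together with repeated applications of Young's inequality and the pointwise bound~\eqref{gest} to convert every surviving power of $|\nabla u|$ into powers of $u$ and $K$, each of these reduces to a small multiple of the left-hand side quantities (absorbed) plus a constant multiple of $\int_Q\zeta^{r-4}u^{m+2(1-\delta)}\bigl(|\nabla\zeta|^2+|\zeta_t|+(\Delta\zeta)^2+|\nabla\zeta|^4+\zeta_t^2\bigr)$; the choice $r>4$ guarantees enough powers of $\zeta$ to spare in the absorption, and the $u_t$-derivative of $\zeta^r$ appearing after the time integration (integration by parts in $t$ on the term $\int\eta u^m u_t\cdot(\text{stuff})$, or directly from the time dependence of $\eta$) produces the $|\zeta_t|$ and $\zeta_t^2$ contributions. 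The main obstacle I anticipate is the bookkeeping of the exponents in the absorption step for $\int\eta u^{m+2p}$ and in handling the $\int\eta u^{m+p}u_t$ term: one must verify that the single admissible $m=m(n,p)$ simultaneously satisfies the positivity (discriminant) condition from Lemma~\ref{l-lemma2}, keeps $m+2(1-\delta)$ in a usable range, and makes the gradient-estimate substitution produce a genuinely absorbable (small-constant) term rather than merely a comparable one — this balancing is the heart of why the subcritical range $1<p<p_S$ is exactly what is needed.
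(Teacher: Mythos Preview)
Your plan shares the overall shape of the paper's argument, but it contains a genuine gap at the core algebraic step. You describe the left-hand side of Lemma~\ref{l-lemma2} as ``a positive-definite quadratic form in $(|\nabla u|^2/u,\ \Delta u)$''; it is not. The coefficient of $\int\eta w^m(\Delta w)^2$ is $-\tfrac{n-1}{n}<0$, so the form is indefinite. Consequently, a direct substitution $\Delta u=u_t-u^p$ and full expansion of $(\Delta u)^2$ produces the term $-\tfrac{n-1}{n}\int\zeta^r u^m u_t^2$ with the \emph{wrong} sign: it cannot be kept on the left as a good term. In the paper this difficulty is bypassed by never expanding $(\Delta u)^2$. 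One factor of $\Delta u$ is replaced by $u_t-u^p$ and the remaining $\Delta u$ is integrated by parts (see the passage after inequality~\eqref{ineq333}). The positive $\tfrac{1}{n}\int\zeta^r u^m u_t^2$ on the left of~\eqref{mainest} comes instead from a separate identity, equation~\eqref{eq444}, obtained by multiplying the PDE by $\zeta^r u^m u_t$ and integrating; this is then used to eliminate the boundary-type term $Z=\int u^m u_t\langle\nabla u,\nabla(\zeta^r)\rangle$ via~\eqref{eq-Z}. Your outline has no analogue of this step.

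A second issue: in your scheme a term $\int\eta u^{m+2p}$ appears and you propose to absorb it using the gradient estimate~\eqref{gest}. But~\eqref{gest} is an \emph{upper} bound on $|\nabla u|$, so it cannot bound $u^{m+2p}$ above by $u^{m-2}|\nabla u|^4$ times a power of $u$; the inequality runs the wrong way. In the paper's route no $u^{m+2p}$ term ever arises. Finally, the source of the $|u_t^-|$ term in~\eqref{mainest} is not $\int\eta u^{m+p}u_t$ (that term becomes an exact time derivative $dg/dt$ plus the harmless $S$), but the cross term $Y=\int\zeta^r u^{m-1}|\nabla u|^2 u_t$, which carries a positive coefficient $\tfrac{nd}{2(n-1)}$ in~\eqref{eq333}. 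It is on $Y$ that one uses~\eqref{gest} to replace $|\nabla u|^2$ by $K^2 u^{2(1-\delta)}$, then splits $u_t=u_t^++u_t^-$ and integrates the $u_t^+$ part in time (estimate~\eqref{eq-iY}); this is where the dependence on $D,\rho_0$ and the extra $|u_t^-|$ integral enter. Your bookkeeping misidentifies both the origin of the $u_t^2$ term and of the $u_t^-$ term, and the proposed absorption of $u^{m+2p}$ would fail.
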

\begin{proof}
By sake of clarity, we set $U=B(\overline{x},\rho)$, $t_1=\overline{t}-\rho^2$ and $t_2=\overline{t}+\rho^2$.\\
Applying Lemma~\ref{l-lemma2} to $w=u(\cdot,t)$ and $\eta=\zeta^r(\cdot,t)$, for any $t\in[t_1,t_2]$ and any reals $d,m$ with $d\neq m+2$, we get
\begin{align*}
&\frac{2(n-m)d-(n-1)(m^2+d^2)}{4n}\int_{U}\zeta^r u^{m-2}|\nabla u|^{4}-\frac{n-1}{n}\int_{U}\zeta^r u^m(\Delta u)^{2}\\
&\quad\qquad-\frac{2(n-1)m+(n+2)d}{2n}\int_{U}\zeta^r u^{m-1}|\nabla u|^{2}\Delta u \\
&\leq \frac{m+d}{2}\int_{U}u^{m-1}|\nabla u|^{2}\langle\nabla u, \nabla (\zeta^r)\rangle + \int_{U}u^m\Delta u \langle\nabla u, \nabla (\zeta^r)\rangle+\frac12 \int_{U}u^m|\nabla u|^{2}\Delta(\zeta^r).
\end{align*}
Hence, substituting $\Delta u=u_t-u^p$, we obtain
\begin{align}
&\frac{2(n-m)d-(n-1)(m^2+d^2)}{4n}\int_{U}\zeta^r u^{m-2}|\nabla u|^{4}\nonumber\\
&\quad\qquad-\frac{n-1}{n}\int_{U}\zeta^r u^mu_t\Delta u+\frac{n-1}{n}\int_{U}\zeta^r u^{m+p}\Delta u\nonumber\\
&\quad\qquad-\frac{2(n-1)m+(n+2)d}{2n}\int_{U}\zeta^r u^{m-1}|\nabla u|^{2}u_t\nonumber\\
&\quad\qquad+\frac{2(n-1)m+(n+2)d}{2n}\int_{U}\zeta^r u^{m+p-1}|\nabla u|^{2}\nonumber\\
&\leq \frac{m+d}{2}\int_{U}u^{m-1}|\nabla u|^{2}\langle\nabla u, \nabla (\zeta^r)\rangle+\int_{U}u^mu_t\langle\nabla u, \nabla (\zeta^r)\rangle\nonumber\\
&\quad\qquad-\int_{U}u^{m+p}\langle\nabla u, \nabla (\zeta^r)\rangle+\frac12 \int_{U}u^m|\nabla u|^{2}\Delta(\zeta^r).\label{ineq333}
\end{align}
Integrating by parts in the two integrals in the second line above, we get
\begin{align*}
-\frac{n-1}{n}&\int_{U}\zeta^r u^mu_t\Delta u+\frac{n-1}{n}\int_{U}\zeta^r u^{m+p}\Delta u\\
=&\,\frac{n-1}{n}\int_U \zeta^r u^m \langle \nabla u, \nabla u_t \rangle
+\frac{m(n-1)}{n}\int_U \zeta^r u^{m-1}|\nabla u|^2 u_t\\
&\,+\frac{n-1}{n}\int_U u^m u_t \langle \nabla u,\nabla(\zeta^r)\rangle-\frac{n-1}{n}\int_U  u^{m+p}\langle \nabla u, \nabla(\zeta^r)\rangle\\
&\,-\frac{(m+p)(n-1)}{n}\int_U \zeta^r u^{m+p-1} |\nabla u|^2.
\end{align*}
Thus, substituting and setting
$$
X=\int_U \zeta^r u^m \langle \nabla u, \nabla u_t \rangle,\quad Y=\int_U \zeta^r u^{m-1}|\nabla u|^2 u_t,
$$
$$
Z=\int_U u^m u_t \langle \nabla u,\nabla(\zeta^r)\rangle, \quad V=\int_U  u^{m+p}\langle \nabla u, \nabla(\zeta^r)\rangle,
$$
$$
W=\int_{U}u^{m-1}|\nabla u|^{2}\langle\nabla u, \nabla (\zeta^r)\rangle,\quad R=\int_{U}u^m|\nabla u|^{2}\Delta(\zeta^r),
$$
the inequality~\eqref{ineq333} becomes
\begin{align*}
&\frac{2(n-m)d-(n-1)(m^2+d^2)}{4n}\int_{U}\zeta^r u^{m-2}|\nabla u|^{4}\\
&\quad\qquad+\frac{n-1}{n}X+\frac{m(n-1)}{n}Y+\frac{n-1}{n}Z-\frac{n-1}{n}V\\
&\quad\qquad-\frac{(m+p)(n-1)}{n}\int_U \zeta^r u^{m+p-1} |\nabla u|^2-\frac{2(n-1)m+(n+2)d}{2n}Y\\
&\quad\qquad+\frac{2(n-1)m+(n+2)d}{2n}\int_{U}\zeta^r u^{m+p-1}|\nabla u|^{2}\\
&\leq \frac{m+d}{2}W+Z-V+\frac{1}{2}R,
\end{align*}
Hence, rearranging and simplifying, we conclude
\begin{align}
\alpha\int_{U}\zeta^r u^{m-2}&|\nabla u|^{4} + \beta \int_U \zeta^r u^{m+p-1}|\nabla u|^2\nonumber\\
&\leq -\frac{n-1}{n}X+\frac{(n+2)d}{2n}Y+\frac{1}{n}Z-\frac{1}{n}V+\frac{m+d}{2}W+\frac12 R,\label{eq-ab}
\end{align}
where
$$
\alpha=\frac{2(n-m)d-(n-1)(m^2+d^2)}{4n}\qquad\text{and}\qquad \beta=\frac{(n+2)d-2(n-1)p}{2n}.
$$
We now choose 
$$
m=-\frac{d}{n-1}
$$
obtaining
$$
\alpha=\frac{\left[2(n-1)-(n-2)d\right]d}{4(n-1)}\qquad\text{and}\qquad \beta=\frac{(n+2)d-2(n-1)p}{2n}.
$$
It is then easy to see that both constants $\alpha$ and $\beta$ are positive if $d$ satisfies
\begin{equation}\label{deq}
\frac{2(n-1)p}{n+2}<d<\frac{2(n-1)}{n-2}
\end{equation}
which is a meaningful condition, since $p<p_S=\frac{n+2}{n-2}$. Thus, we set $d$ to be equal to the average mean of the two values, that is
$$
d=\frac{(n-1)p}{n+2}+\frac{n-1}{n-2}
$$
and consequently
$$
m=m(n,p)=-\frac{p}{n+2}-\frac{1}{n-2},
$$
which implies $d\not=m+2$. Indeed, $d=m+2$ if and only if $p=\frac{(n+2)(n-4)}{n(n-2)}$ but this value is always smaller than 1, for $n\in\NN$. Moreover, from the right inequality~\eqref{deq} we have
$$
m=-\frac{d}{n-1}>-\frac{2}{n-2}.
$$

To get the thesis we now bound some of the right--hand side terms in inequality~\eqref{eq-ab}.\\
By using Young's inequality, for any $\eps>0$, there exists $C(\eps)>0$ such that
\begin{align}\label{eq-V}
V &\leq \eps \int_{U}\zeta^{r}u^{m+p-1}|\nabla u|^{2} + C(\eps)\int_{U}\zeta^{r-2}u^{m+p+1}|\nabla \zeta|^{2},\\
W &\leq \eps \int_{U} \zeta^{r}u^{m-2}|\nabla u|^{4}+C(\eps)\int_{U}\zeta^{r-4}u^{m+2}|\nabla \zeta|^{4},\label{eq-W}\\
R &= r(r-1)\int_{U}\zeta^{r-2}u^{m}|\nabla u|^{2}|\nabla \zeta|^{2}+r\int_{U}\zeta^{r-1}u^{m}|\nabla u|^{2}\Delta \zeta\nonumber \\
&\leq \eps \int_{U}\zeta^{r}u^{m-2}|\nabla u|^{4}+C(\eps)\int_{U}\zeta^{r-4}u^{m+2}|\nabla \zeta|^{4}+C(\eps)\int_{U}\zeta^{r-2}u^{m+2}(\Delta \zeta)^{2}\nonumber\\
&\leq \eps \int_{U}\zeta^{r}u^{m-2}|\nabla u|^{4}+C(\eps)\int_{U}\zeta^{r-4}u^{m+2}|\nabla \zeta|^{4}+C(\eps)\int_{U}\zeta^{r-4}u^{m+2}(\Delta \zeta)^{2}\nonumber\\
&= \eps \int_{U}\zeta^{r}u^{m-2}|\nabla u|^{4}+C(\eps)\int_{U}\zeta^{r-4}u^{m+2}\bigl(|\nabla \zeta|^{4}+(\Delta \zeta)^{2}\bigr)\label{eq-R}
\end{align}
since $\zeta\leq 1$ everywhere.
Moreover,
\begin{equation}\label{eq-X}
X=\int_U \zeta^r u^m \langle \nabla u, \nabla u_t \rangle=\frac{df}{dt}-P-\frac{m}{2}Y
\end{equation}
with 
$$
f=\frac12\int_{U}\zeta^{r}u^{m}|\nabla u|^{2}\qquad\text{and}\qquad P=\frac12 \int_{U}u^{m}|\nabla u|^{2}(\zeta^{r})_{t}.
$$
Then, by equation $u_t=\Delta u+u^p$ and integrating by parts the Laplacian from the second to the third line as we did before, we get
\begin{align}
\int_{U}\zeta^{r}u^{m}u_{t}^{2}&=\int_{U}\zeta^{r}u^{m}u_{t}\left(\Delta u + u^{p}\right) \nonumber\\
&= \int_{U}\zeta^{r}u^{m+p}u_{t}+\int_{U}\zeta^{r}u^{m}u_t\Delta u\nonumber\\
&= \frac{dg}{dt}-S-Z-X-mY\nonumber\\
&= \frac{d(g-f)}{dt}+P-Z-S-\frac{m}{2}Y,\label{eq444}
\end{align}
where
$$
g=\frac{1}{m+p+1}\int_{U}\zeta^{r}u^{m+p+1}\qquad\text{and}\qquad S=\frac{1}{m+p+1}\int_{U}u^{m+p+1}(\zeta^{r})_{t},
$$
which implies
\begin{equation}\label{eq-Z}
Z=\frac{d(g-f)}{dt}+P-S-\frac{m}{2}Y-\int_{U}\zeta^{r}u^{m}u_{t}^{2}.
\end{equation}
We bound $P$ as follows,
\begin{equation}\label{eq-P}
P =\frac{r}{2}\int_{U}\zeta^{r-1}\zeta_{t}u^{m}|\nabla u|^{2}\leq \eps\int_{U}\zeta^r u^{m-2}|\nabla u|^{4}+C(\eps)\int_{U}\zeta^{r-2}u^{m+2}\zeta_{t}^{2}.
\end{equation}
Substituting equalities~\eqref{eq-X},~\eqref{eq-Z} and inequalities~\eqref{eq-V},~\eqref{eq-W},~\eqref{eq-R} into estimate~\eqref{eq-ab}, we obtain
\begin{align*}
\alpha\int_{U}\zeta^r u^{m-2}&|\nabla u|^{4} +\beta \int_U \zeta^r u^{m+p-1}|\nabla u|^2 \\
\leq&\, -\frac{n-1}{n}\Bigl(\frac{df}{dt}-P-\frac{m}{2}Y\Bigr)+\frac{(n+2)d}{2n}Y\\
&\,+\frac{1}{n} \Bigl(\frac{d(g-f)}{dt}+P-S-\frac{m}{2}Y-\int_{U}\zeta^{r}u^{m}u_{t}^{2}\Bigr)\\
&\,+ \frac{\eps}{n} \int_{U}\zeta^{r}u^{m+p-1}|\nabla u|^{2}+\frac{(m+d+1)\eps}{2}\int_{U} \zeta^{r}u^{m-2}|\nabla u|^{4}\\
&\,+C\int_{U}\zeta^{r-4}u^{m+2}\left((\Delta\zeta)^{2}+|\nabla \zeta|^{4}\right)+C\int_{U}\zeta^{r-2}u^{m+p+1}|\nabla \zeta|^{2}\\
\leq&\, -\frac{df}{dt} +\frac{1}{n}\frac{dg}{dt}+\Bigl(\frac{(n-1)m}{2n}+\frac{(n+2)d}{2n}-\frac{m}{2n}\Bigr)Y-\frac{1}{n}\int_{U}\zeta^{r}u^{m}u_{t}^{2}\\
&\,+ \frac{\eps}{n} \int_{U}\zeta^{r}u^{m+p-1}|\nabla u|^{2}+\frac{(m+d+3)\eps}{2}\int_{U} \zeta^{r}u^{m-2}|\nabla u|^{4}\\
&\,+C\int_{U}\zeta^{r-4}u^{m+2}\left((\Delta\zeta)^{2}+|\nabla \zeta|^{4}+\zeta_{t}^{2}\right)+C\int_{U}\zeta^{r-2}u^{m+p+1}\left(|\nabla \zeta|^{2}+|\zeta_{t}|\right)
\end{align*}
for some constant $C=C(n,p,r,\varepsilon)$, where we estimated $S$ simply taking the modulus of the integrand and used inequality~\eqref{eq-P} to deal with $P$.
We notice that the coefficient of the term $Y$ is given by
$$
\frac{(n-1)m}{2n}+\frac{(n+2)d}{2n}-\frac{m}{2n}=\frac{nd}{2(n-1)}>0
$$
as $m=-\frac{d}{n-1}$.\\
Taking $\eps$ small enough and then ``absorbing'' the two integrals in $\nabla u$ in the left side of the inequality, we can conclude that there exists a constant $C_1$ depending only on $n,p,r$, such that for every $t\in (t_1,t_2)$ we have
\begin{align*}
\frac{\alpha}{2}\int_{U}\zeta^r u^{m-2}&|\nabla u|^{4} + \frac{\beta}{2}\int_U \zeta^r u^{m+p-1}|\nabla u|^2 \\
\leq&\, -\frac{df}{dt} +\frac{1}{n}\frac{dg}{dt}+\frac{nd}{2(n-1)}Y-\frac{1}{n}\int_{U}\zeta^{r}u^{m}u_{t}^{2}.\\
&\,+C_1\int_{U}\zeta^{r-4}u^{m+2}\left((\Delta\zeta)^{2}+|\nabla \zeta|^{4}+\zeta_{t}^{2}\right)+C_1\int_{U}\zeta^{r-2}u^{m+p+1}\left(|\nabla \zeta|^{2}+|\zeta_{t}|\right)
\end{align*}
(notice that possibly varying the constant $C_1$, instead of the constants $\alpha/2$ and $\beta/2$ in front of the first two integrals we could have chosen $\alpha-\delta$ and $\beta-\delta$, for any $\delta>0$).\\
Integrating this inequality between $t_{1}$ and $t_{2}$ and observing that $f(t_{i})=g(t_{i})=0$, for $i=1,2$, since $\zeta\in C^{2}_{c}(Q)$, we get
\begin{align}
\frac{\alpha}{2}\int_{Q}\zeta^r u^{m-2}&|\nabla u|^{4} +\frac{\beta}{2}\int_{Q} \zeta^r u^{m+p-1}|\nabla u|^2\nonumber \\
\leq&\,C_1\int_{Q}\zeta^{r-4}u^{m+2}\left((\Delta\zeta)^{2}+|\nabla \zeta|^{4}+\zeta_{t}^{2}\right)+C_1\int_{Q}\zeta^{r-2}u^{m+p+1}\left(|\nabla \zeta|^{2}+|\zeta_{t}|\right)\nonumber\\
&+\frac{nd}{2(n-1)}\int_{t_1}^{t_2}Y-\frac{1}{n}\int_{Q}\zeta^{r}u^{m}u_{t}^{2}.\label{eq333}
\end{align}
Finally, in order to estimate $\int_{t_1}^{t_2}Y$, we make use of the gradient estimate~\eqref{gest} from Lemma~\ref{p-grad}. For any $\delta>0$ we have
\begin{align*}
Y =&\,\int_U \zeta^r u^{m-1}|\nabla u|^2 u_t\\
\leq&\,\int_{U} \zeta^r u^{m-1}|\nabla u|^2 u^+_t \\
\leq&\, K \int_{U} \zeta^{r}u^{m+1-2\delta} u^+_{t},
\end{align*}
where $K$ is defined in formula~\eqref{gestK} and we set $u_t=u^+_t+u^-_t$, $u_t^+=u_t\wedge 0\geq 0$ and $u^-_t=u_t\vee 0\leq0$. It follows
\begin{align}\label{eq-iY}\nonumber
\int_{t_1}^{t_2} Y
&\leq K \int_{U}\int_{t_1}^{t_2} \zeta^{r}u^{m+1-2\delta} u^+_{t}\\
&=K\int_{U}\int_{t_1}^{t_2} \zeta^{r}u^{m+1-2\delta} u_{t} -K\int_{U}\int_{t_1}^{t_2} \zeta^{r}u^{m+1-2\delta} u^-_{t}\nonumber\\
&= K\int_{U}\int_{t_1}^{t_2} \left(\zeta^{r}u^{m+2(1-\delta)}\right)_t - K \int_{U}\int_{t_1}^{t_2}u^{m+2(1-\delta)}(\zeta^{r})_{t}-K\int_{U}\int_{t_1}^{t_2} \zeta^{r}u^{m+1-2\delta} u^-_{t}\nonumber\\
&\leq K \int_Q \zeta^{r-1}u^{m+2(1-\delta)}|\zeta_{t}|-K\int_{Q} \zeta^{r}u^{m+1-2\delta} u^-_{t}
\end{align}
since $\zeta(x,t_i)=0$, for $i=1,2$ and every $x\in U$.\\
Substituting in inequality~\eqref{eq333} we get that there exists a constant $C_2=C_2(n,p,r,\delta,D,\rho_0)$ (since we have seen after Lemma~\ref{p-grad} that $K$ depends on $n,p,\delta,D$ and $\rho_0$) such that
\begin{align*}
\frac{1}{n}\int_{Q}\zeta^{r}u^{m}u_{t}^{2}&\,+\frac{\alpha}{2}\int_{Q}\zeta^r u^{m-2}|\nabla u|^{4} +\frac{\beta}{2}\int_Q \zeta^r u^{m+p-1}|\nabla u|^2 \\
&\leq C_2\Bigl(\int_{Q}\zeta^{r-2}u^{m+p+1}\left(|\nabla \zeta|^{2}+|\zeta_{t}|\right)+\int_{Q}\zeta^{r-4}u^{m+2}\left((\Delta\zeta)^{2}+|\nabla \zeta|^{4}+\zeta_{t}^{2}\right)\Bigr)\\
&\quad+C_2\int_{Q}\zeta^{r-1}u^{m+2(1-\delta)}|\zeta_{t}| -C_2\int_{Q} \zeta^{r}u^{m+1-2\delta} u^-_{t}.
\end{align*}
Since $u\leq D$, $0\leq\zeta\leq1$ and $m+p+1>m+2>m+2(1-\delta)>0$, for $\delta$ sufficiently small, as $m>-\frac{2}{n-2}$ and $n\geq 3$, 
we have $u^{m+p+1}\leq C u^{m+2(1-\delta)}$, then there exists a positive constant $C_3=C_3(n,p,r,\delta, D,\rho_0)$ such that 
\begin{align*}
\int_{Q}\zeta^{r}u^{m}u_{t}^{2}&\,+\int_{Q}\zeta^r u^{m-2}|\nabla u|^{4} + \int_Q \zeta^r u^{m+p-1}|\nabla u|^2 \\
&\leq C_3\int_{Q}\zeta^{r-4}u^{m+2(1-\delta)}\left(|\nabla \zeta|^{2}+|\zeta_{t}|+(\Delta\zeta)^{2}+|\nabla \zeta|^{4}+\zeta_{t}^{2}\right)\\
&\quad -C_3\int_{Q}\zeta^{r}u^{m+1-2\delta} u^-_{t},
\end{align*}
which is the thesis.
\end{proof}

\begin{remark}\label{p-lemma3-n12}
The same proposition also holds for $n=1,2$, for every $p>1$, considering $m=0$, $d=1$, if $n=1$ and $m=-d$ with any $d>2(n-1)p/(n+2)$, when $n=2$.
\end{remark}

Now we see that the estimate of this proposition implies an interior integral estimate on $u^{2p+m}$ (see~\cite[Lemma~4]{bi-ve}).

\begin{lemma}\label{l-lemma4}
In the same setting and with $m$ as in Proposition~\ref{p-lemma3}, if $r>\frac{2(2p+m)}{p-1+\delta}$ there exists a constant $C=C(n,p,r,\delta,D,\rho_0)>0$ such that
\begin{align}
\int_{Q}\zeta^{r}u^{2p+m} \leq&\, C\int_{Q}\zeta^{r-\frac{2(2+m)}{p-1+\delta}}\left(|\nabla \zeta|^{2}+|\zeta_{t}|+(\Delta\zeta)^{2}+|\nabla \zeta|^{4}+\zeta_{t}^{2}\right)^{\frac{2p+m}{2p-2(1-\delta)}}\nonumber\\
&\,+C\int_{Q}\zeta^{r}u^{m+1-2\delta} |u^-_{t}|.\label{eq445}
\end{align}
\end{lemma}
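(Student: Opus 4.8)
The plan is to mimic the proof of Lemma~4 in~\cite{bi-ve}, by inserting the equation into the integrals already controlled by Proposition~\ref{p-lemma3}. Writing the equation as $u^{p}=u_{t}-\Delta u$, multiplying by $\zeta^{r}u^{p+m}$, integrating over $Q$ and integrating by parts in space the term with $\Delta u$ (no boundary contribution, since $\zeta(\cdot,t)\in C^{2}_{c}(U)$ for every fixed $t$), one obtains the identity
\begin{equation*}
\int_{Q}\zeta^{r}u^{2p+m}=\int_{Q}\zeta^{r}u^{p+m}u_{t}+(p+m)\int_{Q}\zeta^{r}u^{m+p-1}|\nabla u|^{2}+r\int_{Q}\zeta^{r-1}u^{p+m}\langle\nabla\zeta,\nabla u\rangle .
\end{equation*}
All integrals are finite because $0<u\le D$ and $\zeta$ is compactly supported, so the manipulations below are legitimate and the left--hand side can be freely absorbed. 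The second term on the right is, up to the constant $p+m$, exactly the third integral on the left of~\eqref{mainest}; when $p+m\le0$ it is nonpositive and is simply dropped, otherwise Proposition~\ref{p-lemma3} bounds it by the right--hand side of~\eqref{mainest}.

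For the first term I would bound $\int_{Q}\zeta^{r}u^{p+m}u_{t}\le\int_{Q}\zeta^{r}u^{p+m}|u_{t}|\le\bigl(\int_{Q}\zeta^{r}u^{m}u_{t}^{2}\bigr)^{1/2}\bigl(\int_{Q}\zeta^{r}u^{2p+m}\bigr)^{1/2}$ by Cauchy--Schwarz and then apply Young's inequality: this produces a small multiple of $\int_{Q}\zeta^{r}u^{2p+m}$, to be absorbed on the left, plus a multiple of $\int_{Q}\zeta^{r}u^{m}u_{t}^{2}$, again controlled by Proposition~\ref{p-lemma3}. (Alternatively one integrates by parts in time, turning it into $-\tfrac{r}{p+m+1}\int_{Q}\zeta^{r-1}\zeta_{t}u^{p+m+1}$, a cutoff term of the type treated in the last step.) For the third term, Cauchy--Schwarz against $\int_{Q}\zeta^{r}u^{m+p-1}|\nabla u|^{2}$ followed by Young's inequality leaves, besides quantities controlled by Proposition~\ref{p-lemma3}, a term $C\int_{Q}\zeta^{r-2}u^{p+m+1}|\nabla\zeta|^{2}$. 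At this point everything that remains is the cutoff integral $\int_{Q}\zeta^{r-4}u^{m+2(1-\delta)}\bigl(|\nabla\zeta|^{2}+|\zeta_{t}|+(\Delta\zeta)^{2}+|\nabla\zeta|^{4}+\zeta_{t}^{2}\bigr)$ produced by~\eqref{mainest}, together with terms of the form $\int_{Q}\zeta^{r-k}u^{p+m+1}(|\nabla\zeta|^{2}+|\zeta_{t}|)$ with $k\le 4$; since $p+m+1=\bigl(m+2(1-\delta)\bigr)+(p-1+2\delta)$ and $p-1+2\delta>0$, using $0<u\le D$ and $\zeta\le 1$ I can dominate these last terms by a constant times the former, so that the whole estimate is reduced to controlling the single integral $\int_{Q}\zeta^{r-4}u^{m+2(1-\delta)}\bigl(|\nabla\zeta|^{2}+\cdots+\zeta_{t}^{2}\bigr)$.

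The final step is a pure application of Young's inequality to this integral, and it is where the exponents of~\eqref{eq445} come out: for each of the five summands $g$ I split $\zeta^{r-4}u^{m+2(1-\delta)}g=\bigl(\zeta^{a}u^{m+2(1-\delta)}\bigr)\bigl(\zeta^{r-4-a}g\bigr)$ and apply Young with the conjugate exponents $\tau=\tfrac{2p+m}{m+2(1-\delta)}$ and $\tau'=\tfrac{2p+m}{2p-2(1-\delta)}$, choosing $a=r/\tau$, so that the first factor becomes $\zeta^{r}u^{2p+m}$ (times $\eps$, to be absorbed) and the second becomes a positive power of $\zeta$ times $g^{\tau'}$. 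The hypothesis $r>\tfrac{2(2p+m)}{p-1+\delta}=4\tau'$ is exactly what guarantees $a<r-4$ and that the surviving power of $\zeta$ is positive; then, using $\zeta\le 1$ to raise all the $\zeta$--exponents to a common value together with the elementary inequality $\sum_{i}g_{i}^{\tau'}\le C\bigl(\sum_{i}g_{i}\bigr)^{\tau'}$, the five pieces collapse into a single term of the type appearing on the right of~\eqref{eq445}. Keeping the term $\int_{Q}\zeta^{r}u^{m+1-2\delta}|u_{t}^{-}|$ of~\eqref{mainest} untouched throughout, and choosing all the $\eps$'s small enough to absorb the various $\eps\,\zeta^{r}u^{2p+m}$ contributions on the left, one obtains~\eqref{eq445}. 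The only genuinely delicate point is this bookkeeping of the exponents in the last two steps, together with checking positivity of $m+2(1-\delta)$ and that $\tau>1$ — both resting on the bound $m>-\tfrac{2}{n-2}$ from Proposition~\ref{p-lemma3} and comfortably satisfied for $n\ge5$ and $\delta$ small.
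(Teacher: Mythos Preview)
Your proof is correct and follows essentially the same route as the paper: multiply the equation by $\zeta^{r}u^{m+p}$, integrate by parts, reduce to the quantities controlled by Proposition~\ref{p-lemma3}, and finish with Young's inequality using the conjugate pair $\tau=\frac{2p+m}{m+2(1-\delta)}$, $\tau'=\frac{2p+m}{2(p-1+\delta)}$. The one cosmetic difference is your primary treatment of $\int_{Q}\zeta^{r}u^{m+p}u_{t}$ via Cauchy--Schwarz against $\int_{Q}\zeta^{r}u^{m}u_{t}^{2}$ (which Proposition~\ref{p-lemma3} conveniently provides), whereas the paper integrates in time as in your parenthetical alternative; both work and yield the same final estimate.
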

\begin{proof}
Multiplying equation~\eqref{eq-pde} by $\zeta^{r}u^{m+p}$ and integrating on $U$, we obtain
\begin{align*}
\int_{U}\zeta^{r}u^{2p+m} &= \int_{U}\zeta^{r}u^{m+p}u_{t}-\int_{U}\zeta^{r}u^{m+p}\Delta u \\
&= \frac{dg}{dt}-S+V+(m+p)\int_{U}\zeta^{r}u^{m+p-1}|\nabla u|^{2}.
\end{align*}
Estimating $S$ and $V$ as in the proof of Proposition~\ref{p-lemma3}, then integrating between $t_{1}$ and $t_{2}$ and using estimate~\eqref{mainest}, we get
\begin{align*}
\int_{Q}\zeta^{r}u^{2p+m} &\leq C\int_{Q}\zeta^{r-4}u^{m+2(1-\delta)}\left(|\nabla \zeta|^{2}+|\zeta_{t}|+(\Delta\zeta)^{2}+|\nabla \zeta|^{4}+\zeta_{t}^{2}\right)\\
&\quad +C\int_{Q}\zeta^{r}u^{m+1-2\delta} |u^-_{t}|
\end{align*}
with a constant $C=C(n,p,r,\delta,D,\rho_0)$.
Observing that
$$
2p+m>m+2(1-\delta),
$$
the conclusion of the lemma follows easily by means of Young's inequality.
\end{proof}

We then have the following integral decay estimate if $u$ is ``pointwise'' monotone nondecreasing in time, that is $u_t\geq0$ everywhere.

\begin{proposition}\label{p-intdec}
Let $\Omega\subseteq\RR^n$ and $u\in C^{2,1}(\Omega\times(0,T))$ be a positive solution of equation~\eqref{eq-pde} with $p<p_S=\frac{n+2}{n-2}$, such that $0<u\leq D$ and $u_t\geq0$. Let $0<\overline{t}<T$ and $\overline{x}\in \Omega$, for $m>-\frac{2}{n-2}$ as in Proposition~\ref{p-lemma3}, $\delta\in(0,1/100)$, $r>\frac{2(2p+m)}{p-1+\delta}$ and $\rho_0>0$, there exists a constant $C=C(n,p,r,\delta,D,\rho_0)>0$ such that, for every $\rho>\rho_0$ with $\overline{B}(\overline{x},2\rho)\times[\overline{t}-4\rho^2,\overline{t}+4\rho^2]\subseteq\Omega\times(0,T)$, letting $Q(\rho)=B(\overline{x},\rho)\times(\overline{t}-\rho^{2},\overline{t}+\rho^{2})$, there holds
$$
\int_{Q(\rho/2)} u^{2p+m} \leq C \rho^{n+2-\frac{2p+m}{p-1+\delta}}.
$$
\end{proposition}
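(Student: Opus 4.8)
The plan is to deduce this decay estimate directly from Lemma~\ref{l-lemma4}, using the monotonicity hypothesis to discard its last term and then testing the inequality against a standard parabolic cutoff adapted to the nested cylinders $Q(\rho/2)\subseteq Q(\rho)$. Since we are in the Euclidean setting $\Omega\subseteq\RR^n$, no special cutoff machinery is needed: a product of a spatial bump on $B(\overline{x},\rho)$ equal to $1$ on $B(\overline{x},\rho/2)$ with a temporal bump on $(\overline{t}-\rho^2,\overline{t}+\rho^2)$ equal to $1$ on $(\overline{t}-\rho^2/4,\overline{t}+\rho^2/4)$ will do.

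First, because $u_t\geq 0$ everywhere on $\Omega\times(0,T)$, the part of $u_t$ recording negativity, namely $u_t^-=u_t\vee 0$ in the paper's convention, vanishes identically, so the term $C\int_Q\zeta^r u^{m+1-2\delta}|u_t^-|$ in~\eqref{eq445} is zero. Hence, for any $\zeta\in C^2_c(Q)$ with values in $[0,1]$, $Q=Q(\rho)$, and under the standing assumption $r>\frac{2(2p+m)}{p-1+\delta}$, Lemma~\ref{l-lemma4} reduces to
\[
\int_{Q}\zeta^{r}u^{2p+m}\leq C\int_{Q}\zeta^{\,r-\frac{2(2+m)}{p-1+\delta}}\left(|\nabla \zeta|^{2}+|\zeta_{t}|+(\Delta\zeta)^{2}+|\nabla \zeta|^{4}+\zeta_{t}^{2}\right)^{\frac{2p+m}{2p-2(1-\delta)}},
\]
with $C=C(n,p,r,\delta,D,\rho_0)$; note the larger cylinder required to apply Proposition~\ref{p-lemma3} and Lemma~\ref{p-grad} is exactly the one assumed to lie in $\Omega\times(0,T)$.

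Next, I would fix $\zeta$ as above, so that $\zeta\equiv 1$ on $Q(\rho/2)$ and $|\nabla\zeta|\leq C/\rho$, $|\Delta\zeta|\leq C/\rho^2$, $|\zeta_t|\leq C/\rho^2$ for a dimensional constant $C$. Then each of the five terms inside the parenthesis is bounded by $C\rho^{-2}$: the quadratic terms directly, and the quartic-order terms $\rho^{-4}$ because $\rho>\rho_0$ forces $\rho^{-4}\leq\rho_0^{-2}\rho^{-2}$; thus the parenthesis is $\leq C(n,\rho_0)\,\rho^{-2}$. Since $2p-2(1-\delta)=2(p-1+\delta)$, raising to the power $\frac{2p+m}{2p-2(1-\delta)}$ gives a bound $C\rho^{-\frac{2p+m}{p-1+\delta}}$. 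The exponent $r-\frac{2(2+m)}{p-1+\delta}$ is positive, because $p>1$ gives $r>\frac{2(2p+m)}{p-1+\delta}>\frac{2(2+m)}{p-1+\delta}$ and $2+m>2-\frac{2}{n-2}>0$, so $\zeta^{\,r-\frac{2(2+m)}{p-1+\delta}}\leq 1$ on $Q$. Combining,
\[
\int_{Q(\rho/2)}u^{2p+m}\leq\int_{Q}\zeta^{r}u^{2p+m}\leq C\rho^{-\frac{2p+m}{p-1+\delta}}\,|Q(\rho)|\leq C\rho^{\,n+2-\frac{2p+m}{p-1+\delta}},
\]
using $|Q(\rho)|=2\rho^2\,|B(\overline{x},\rho)|\leq C\rho^{n+2}$, which is the assertion.

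I do not expect a genuine analytic obstacle here, since the substantive work was already done in Proposition~\ref{p-lemma3} and Lemma~\ref{l-lemma4}; the only points demanding a little care are the exponent bookkeeping (verifying $2p+m>0$ and $r-\frac{2(2+m)}{p-1+\delta}>0$, both consequences of $p>1$ and $m>-\frac{2}{n-2}$), the fact that all constants remain independent of $\rho>\rho_0$ — the dependence on $\rho_0$ entering only through the constant $K$ of the gradient estimate~\eqref{gestK} — and keeping track that the derivative bounds on the cutoff scale as $\rho^{-2}$ rather than $\rho^{-1}$ in the time variable.
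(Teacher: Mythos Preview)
Your proposal is correct and follows essentially the same approach as the paper: drop the $u_t^-$ term in Lemma~\ref{l-lemma4} by monotonicity, plug in a standard product cutoff $\zeta=\varphi(x)\psi(t)$ adapted to $Q(\rho/2)\subset Q(\rho)$ with the scaling $|\Delta\varphi|+|\nabla\varphi|^2+|\psi_t|\leq C(n)\rho^{-2}$, and read off the power of $\rho$. Your write-up actually supplies more detail than the paper's (the check that $r-\tfrac{2(2+m)}{p-1+\delta}>0$, the absorption of the $\rho^{-4}$ terms into $\rho_0^{-2}\rho^{-2}$), all of which is fine; the only tiny slip is the strict inequality $2-\tfrac{2}{n-2}>0$, which is an equality when $n=3$, but since $m>-\tfrac{2}{n-2}$ strictly you still get $2+m>0$ and the argument goes through.
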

\begin{proof}
Since $u_t^-=0$, we obtain the thesis by applying Lemma~\ref{l-lemma4}, choosing $\zeta(x,t)=\varphi(x)\psi(t)$ with $\varphi\in C^{2}_{c}(B(\overline{x},\rho))$ and $\psi\in C^{1}_{c}(\overline{t}-\rho^{2},\overline{t}+\rho^{2})$, both taking values in $[0,1]$, such that $\varphi=1$ on $B(\overline{x},\rho/2)$, $\psi= 1$ on $[\overline{t}-\rho^{2}/4,\overline{t}+\rho^{2}/4]$ and 
\begin{equation}\label{eq-cutoff}
|\Delta \varphi|+|\nabla \varphi|^{2}+|\psi_{t}| \leq C(n) \rho^{-2}. 
\end{equation}
\end{proof}

\begin{remark}\label{r-cutoff}
Given a complete Riemannian manifold with nonnegative Ricci tensor, it is possible to construct cutoff functions satisfying conditions~\eqref{eq-cutoff} (see~\cite{guneysu}, for instance). In particular, using Remarks~\ref{r-g} and~\ref{r-bv}, it is then clear that the same proof goes through also in this case. 
\end{remark}

\begin{proposition}\label{p-intdec-cor}
Let $u\in C^{2,1}(\Omega\times(0,T))$ be a positive bounded solution of equation~\eqref{eq-pde} with $p<p_S=\frac{n+2}{n-2}$, such that $0<u\leq D$ and $u_t\geq0$, where $\Omega$ is an open subset of a Riemannian manifold $(M,g)$ with nonnegative Ricci tensor. Let $0<\overline{t}<T$ and $\overline{x}\in \Omega$. For $m>-\frac{2}{n-2}$ as in Proposition~\ref{p-lemma3}, $\delta\in(0,1/100)$, $r>\frac{2(2p+m)}{p-1+\delta}$ and $\rho_0>0$, there exists a constant $C=C(n,p,r,\delta,D,\rho_0)>0$ such that, for every $\rho>\rho_0$ with $\overline{B}(\overline{x},2\rho)\times[\overline{t}-4\rho^2,\overline{t}+4\rho^2]\subseteq\Omega\times(0,T)$, letting 
$Q(\rho)=B(\overline{x},\rho)\times(\overline{t}-\rho^{2},\overline{t}+\rho^{2})$, there holds
$$
\int_{Q(\rho/2)} u^{2p+m} \leq C\rho^{2-\frac{2p+m}{p-1+\delta}}\mu(B(\overline{x},\rho)),
$$
where $B(\overline{x},\rho)\subseteq M$ is the metric ball with center $\overline{x}$ and radius $\rho$ and $\mu$ the canonical Riemannian measure of $(M,g)$.
\end{proposition}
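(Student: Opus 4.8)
\emph{Proof proposal.} The plan is to rerun the proof of Proposition~\ref{p-intdec} essentially verbatim, the only change being that at the last step we keep the Riemannian volume $\mu(B(\overline{x},\rho))$ instead of estimating it by $C(n)\rho^n$. First I would record that every ingredient is available in this setting: the Hamilton-type gradient estimate of Lemma~\ref{p-grad}, hence inequality~\eqref{gest}, holds on complete manifolds with nonnegative Ricci tensor by Remark~\ref{r-g}; Lemma~\ref{l-lemma2} rests only on the Bochner inequality, so by Remark~\ref{r-bv} it, Proposition~\ref{p-lemma3} and Lemma~\ref{l-lemma4} continue to hold when the Ricci tensor is nonnegative; and on such a manifold, for every $\rho>\rho_0$, one can construct (Remark~\ref{r-cutoff}) functions $\varphi\in C^{2}_c(B(\overline{x},\rho))$ and $\psi\in C^{1}_c(\overline{t}-\rho^2,\overline{t}+\rho^2)$ with values in $[0,1]$, $\varphi\equiv1$ on $B(\overline{x},\rho/2)$, $\psi\equiv1$ on $[\overline{t}-\rho^2/4,\overline{t}+\rho^2/4]$ and $|\Delta\varphi|+|\nabla\varphi|^2+|\psi_t|\leq C(n)\rho^{-2}$.

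Next I would set $\zeta(x,t)=\varphi(x)\psi(t)$ and apply Lemma~\ref{l-lemma4}. Since $u_t\geq0$ we have $u^-_t=0$, so the last integral in~\eqref{eq445} drops and
\[
\int_{Q(\rho)}\zeta^{r}u^{2p+m}\leq C\int_{Q(\rho)}\zeta^{\,r-\frac{2(2+m)}{p-1+\delta}}\bigl(|\nabla\zeta|^{2}+|\zeta_t|+(\Delta\zeta)^{2}+|\nabla\zeta|^{4}+\zeta_t^{2}\bigr)^{\frac{2p+m}{2p-2(1-\delta)}}.
\]
On the support of the derivatives of $\zeta$, which is contained in $Q(\rho)$, the cutoff bounds give $|\nabla\zeta|^{2}+|\zeta_t|\leq C(n)\rho^{-2}$ and $(\Delta\zeta)^{2}+|\nabla\zeta|^{4}+\zeta_t^{2}\leq C(n)\rho^{-4}\leq C(n)\rho_0^{-2}\rho^{-2}$, so the whole parenthesis is $\leq C(n,\rho_0)\rho^{-2}$. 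Since $2p-2(1-\delta)=2(p-1+\delta)$, raising to the power $\frac{2p+m}{2(p-1+\delta)}$ yields precisely $C\rho^{-\frac{2p+m}{p-1+\delta}}$; moreover $r>\frac{2(2p+m)}{p-1+\delta}>\frac{2(2+m)}{p-1+\delta}$ (as $p>1$), so the remaining power of $\zeta$ is nonnegative and $\zeta^{\,r-\frac{2(2+m)}{p-1+\delta}}\leq1$. Estimating the integral of the characteristic function of $Q(\rho)$ by $|Q(\rho)|=2\rho^2\,\mu(B(\overline{x},\rho))$ and using $\zeta^{r}\equiv1$ on $Q(\rho/2)$, we obtain
\[
\int_{Q(\rho/2)}u^{2p+m}\leq\int_{Q(\rho)}\zeta^{r}u^{2p+m}\leq C\rho^{-\frac{2p+m}{p-1+\delta}}\cdot\rho^{2}\,\mu(B(\overline{x},\rho))=C\rho^{\,2-\frac{2p+m}{p-1+\delta}}\mu(B(\overline{x},\rho)),
\]
with $C=C(n,p,r,\delta,D,\rho_0)$, which is the claim.

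I do not expect a genuine obstacle here: the substance has already been done in Proposition~\ref{p-lemma3} and Lemma~\ref{l-lemma4}, and what remains is bookkeeping. The two points needing a little care are, first, verifying that all the preliminary results and the existence of good cutoff functions really transfer to manifolds with nonnegative Ricci tensor — which is exactly the content of Remarks~\ref{r-g}, \ref{r-bv} and~\ref{r-cutoff} — and, second, combining the mismatched decay rates $\rho^{-2}$ and $\rho^{-4}$ coming respectively from first- and second-order derivatives of $\zeta$ into a single $\rho^{-2}$; this is precisely why the restriction $\rho>\rho_0$ (so that $\rho^{-4}\leq\rho_0^{-2}\rho^{-2}$) is imposed and why the final constant is permitted to depend on $\rho_0$.
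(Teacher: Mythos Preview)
Your proposal is correct and follows exactly the route the paper indicates: the paper does not give a separate proof of this proposition but simply states (via Remark~\ref{r-cutoff}) that the proof of Proposition~\ref{p-intdec} goes through on manifolds with nonnegative Ricci tensor once one invokes Remarks~\ref{r-g} and~\ref{r-bv} and the existence of good cutoff functions. Your write-up spells out precisely these steps, including the bookkeeping with the $\rho^{-2}$ versus $\rho^{-4}$ decay and the appearance of $\mu(B(\overline{x},\rho))$ in place of $\rho^n$.
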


\begin{remark} 
The assumption that $u$ is ``pointwise'' monotone nondecreasing in time (that is, $u_t^-$ is identically zero, hence the second integral in the right hand side of formula~\eqref{eq445} in Lemma~\ref{l-lemma4} vanishes) can be weakened a little. 
Indeed, notice that if $\delta>0$ is sufficiently small, we have
$$
m+1-2\delta>-\frac{2}{n-2}+1-2\delta>0,
$$
when $n>4$, hence, using Young's inequality, we get
$$
C\int_{Q}\zeta^ru^{m+1-2\delta} |u^-_{t}|\leq \frac12\int_{Q}\zeta^r u^{2p+m}+\overline{C}\int_Q\zeta^r|u^-_t|^{\frac{2p+m}{2p-1+2\delta}}
$$
where $C$ is the constant in formula~\eqref{eq445}. Then, we obtain
\begin{align*}
\frac{1}{2}\int_{Q}\zeta^{r}u^{2p+m} \leq&\, C\int_{Q}\zeta^{r-\frac{2(2+m)}{p-1+\delta}}\left(|\nabla \zeta|^{2}+|\zeta_{t}|+(\Delta\zeta)^{2}+|\nabla \zeta|^{4}+\zeta_{t}^{2}\right)^{\frac{2p+m}{2p-2(1-\delta)}}\nonumber\\
&\,+\overline{C}\int_Q|u^-_t|^{\frac{2p+m}{2p-1+2\delta}},
\end{align*}
being $0\leq\zeta\leq 1$.\\
Hence, if there exists a constant $\widetilde{C}=\widetilde{C}(n,p,r,\delta,D,\rho_0)$, such that, for $\rho>\rho_0$ with $\overline{B}(\overline{x},2\rho)\times[\overline{t}-4\rho^2,\overline{t}+4\rho^2]\subseteq\Omega\times(0,T)$, there holds
\begin{equation}\label{extraAss}
\int_Q|u^-_t|^{\frac{2p+m}{2p-1+2\delta}} \leq \widetilde{C}\rho^{2-\frac{2p+m}{p-1+\delta}}\mu(B(\overline{x},\rho)),
\end{equation}
then we have the same conclusion of the above proposition, along the same line of proof.
\end{remark}

\section{Weak Harnack inequality and the proof of Theorem~\ref{t-triv}}

We are going to apply the following estimates of Aronson--Serrin in~\cite{aroser}, which have been extended to manifolds with Ricci tensor bounded from below by Saloff--Coste in~\cite{saloff}. For $\Omega\subseteq\RR^n$, let $u\in C^{2,1}(\Omega\times(0,T))$ be a positive bounded solution of equation~\eqref{eq-pde}, with $p<p_S=\frac{n+2}{n-2}$. Following their notation, we set
$$
d=u^{p-1}\qquad\qquad\text{hence,}\qquad\qquad u_t=\Delta u+u^p = \Delta u + d u.
$$
Fixed a pair $(\overline{x}, \overline{t})\in \Omega\times(0,T)$ and $\rho>0$, we consider the parabolic cylinder
$$
\widetilde{Q}(\rho)=B(\overline{x},\rho)\times (\overline{t}-\rho^2,\overline{t})\subseteq Q(\rho)
$$
(where $Q(\rho)=B(\overline{x},\rho)\times (\overline{t}-\rho^2,\overline{t}+\rho^2)$ was the set defined in the previous section) and we will use the symbol $\Vert\cdot\Vert_{q,\rho}$ to denote the $L^q$--norm of a function in $\widetilde{Q}(\rho)$. 
In~\cite{aroser} the following weak Harnack estimate is proved (we mention that the assumption $q>\frac{n+2}{2}$ guarantees the validity of the assumption $(4)$ in~\cite{aroser}, necessary for~Lemma~3).

\begin{theorem}[{\cite[Theorem~2]{aroser}}]\label{t-aroser}
For $\Omega\subseteq\RR^n$, let $u\in C^{2,1}(\Omega\times(0,T))$ be a solution of equation $u_t=\Delta u + d u$. Suppose that $\widetilde{Q}(3\rho)\subseteq \Omega\times(0,T)$, then, for any $q>\frac{n+2}{2}$, there exists a constant $C=C(n,q,\rho,d)>0$ such that
$$
|u(x,t)| \leq C \rho^{-\frac{n+2}{2}}\,\Vert u \Vert_{2,3\rho}
$$
for every $(x,t)\in \widetilde{Q}(\rho)$.
\end{theorem}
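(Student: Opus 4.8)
\emph{Setup.} The statement is the classical parabolic local boundedness estimate of De Giorgi--Nash--Moser type for the linear equation $u_t=\Delta u+du$ with a potential $d$ in the parabolic-subcritical Lebesgue class $L^q_{loc}$, $q>\frac{n+2}{2}$. I would prove it by Moser's iteration, keeping explicit track of the dependence of the constants on $\rho$ and on $\Vert d\Vert_{L^q(\widetilde Q(3\rho))}$. First, two reductions. By the parabolic rescaling $v(y,s)=u(\overline{x}+\rho y,\overline{t}+\rho^2 s)$ one reduces to $\rho=1$: the rescaled potential $\widetilde d(y,s)=\rho^2 d(\overline{x}+\rho y,\overline{t}+\rho^2 s)$ satisfies $\Vert \widetilde d\Vert_{L^q(\widetilde Q(3))}=\rho^{2-\frac{n+2}{q}}\Vert d\Vert_{L^q(\widetilde Q(3\rho))}$, which is finite and depends only on $\rho$ and $d$ since $q>\frac{n+2}{2}$ makes the exponent nonnegative, while the factor $\rho^{-\frac{n+2}{2}}$ in the conclusion is precisely the Jacobian factor relating the $L^2$ norms on $\widetilde Q(3)$ and on $\widetilde Q(3\rho)$. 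Second, since $u$ solves a linear equation, $u^+=u\vee 0$ and $u^-=(-u)\vee 0$ are nonnegative weak subsolutions of $w_t\le\Delta w+dw$, so it suffices to prove that every nonnegative subsolution $w$ satisfies $\sup_{\widetilde Q(1)}w\le C\,\Vert w\Vert_{L^2(\widetilde Q(3))}$ and then add the two bounds, using $|u|=u^++u^-$.

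\emph{Caccioppoli estimate and gain of integrability.} For $\gamma\ge 1$ and a cutoff $\eta\in C^\infty_c(\widetilde Q(3))$ with values in $[0,1]$, test the subsolution inequality with $\eta^2 w^{\gamma}$ (after the usual truncation of $w$ away from zero), integrating by parts in space and in time, to obtain
$$
\sup_s\int\eta^2 w^{\gamma+1}\,dy+\int\!\!\!\int\bigl|\nabla(\eta w^{\frac{\gamma+1}{2}})\bigr|^2\le C\gamma\int\!\!\!\int\bigl(|\nabla\eta|^2+|\eta\,\partial_s\eta|+\eta^2|d|\bigr)w^{\gamma+1},
$$
the constant being polynomial in $\gamma$. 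Setting $\varphi=\eta w^{(\gamma+1)/2}$, the potential term is controlled by H\"older as $\int\!\int|d|\varphi^2\le\Vert d\Vert_{L^q}\Vert\varphi\Vert_{L^{2q'}}^2$; since $q>\frac{n+2}{2}$ forces $q'<\frac{n+2}{n}$, the exponent $2q'$ lies in $[2,\frac{2(n+2)}{n})$, so one interpolates $\Vert\varphi\Vert_{L^{2q'}}$ between $L^2$ and $L^{2(n+2)/n}$ and applies the parabolic Sobolev inequality $\Vert\varphi\Vert_{L^{2(n+2)/n}(\widetilde Q)}^2\le C\bigl(\sup_s\int\varphi^2+\int\!\int|\nabla\varphi|^2\bigr)$ together with Young's inequality to absorb a small fraction of the left-hand side; the price is a lower-order term $C(\gamma,\Vert d\Vert_{L^q})\int\!\int_{\widetilde Q(3)}w^{\gamma+1}$ with constant still polynomial in $\gamma$. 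Feeding the resulting energy bound back into the parabolic Sobolev inequality yields, with $\kappa=\frac{n+2}{n}>1$,
$$
\Bigl(\int\!\!\!\int\varphi^{2\kappa}\Bigr)^{1/\kappa}\le C(\gamma,\Vert d\Vert_{L^q})\int\!\!\!\int\bigl(|\nabla\eta|^2+|\partial_s\eta|+1\bigr)w^{\gamma+1}.
$$

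\emph{Iteration and conclusion.} Choose radii $\rho_k=1+2^{1-k}$ (so $\rho_0=3$, $\rho_k\downarrow 1$) and cutoffs $\eta_k$ equal to $1$ on $\widetilde Q(\rho_{k+1})$, supported in $\widetilde Q(\rho_k)$, with $|\nabla\eta_k|^2+|\partial_s\eta_k|\le C\,4^k$, and exponents $\gamma_k+1=2\kappa^k$ (so the iteration starts at the $L^2$ level). Inserting these into the last inequality, the quantities $\Phi_k:=\Vert w\Vert_{L^{2\kappa^k}(\widetilde Q(\rho_k))}$ satisfy a recursion $\Phi_{k+1}\le\bigl(C\,4^k\kappa^{\gamma' k}\bigr)^{\kappa^{-k}}\Phi_k$ for a fixed $\gamma'$; since $\sum_k\kappa^{-k}<\infty$ and $\sum_k k\kappa^{-k}<\infty$, iterating and passing to the limit gives $\sup_{\widetilde Q(1)}w=\lim_k\Phi_k\le C\,\Phi_0=C\,\Vert w\Vert_{L^2(\widetilde Q(3))}$. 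Applying this to $w=u^+$ and $w=u^-$, adding, and undoing the rescaling yields $|u(x,t)|\le C\rho^{-\frac{n+2}{2}}\Vert u\Vert_{2,3\rho}$ on $\widetilde Q(\rho)$ with $C=C(n,q,\rho,d)$, as claimed.

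\emph{Main obstacle.} The one genuinely delicate point is the treatment of the potential term $\int\!\int\eta^2|d|w^{\gamma+1}$: one must use the \emph{strict} inequality $q>\frac{n+2}{2}$ to make the interpolation exponent subcritical, so that only an $\varepsilon$-fraction of the good energy terms is consumed, and — crucially for the iteration to close — verify that the resulting constants depend on $\gamma$ at most polynomially, so that $\prod_k C(\gamma_k)^{\kappa^{-k}}$ converges. On a complete Riemannian manifold with nonnegative Ricci tensor the same scheme applies once the local parabolic Sobolev inequality is available, which there follows from volume doubling and the local Poincar\'e inequality in the spirit of Saloff-Coste; this is the extension quoted in the text.
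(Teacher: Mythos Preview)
The paper does not prove this statement itself; it is cited as Theorem~2 of Aronson--Serrin~\cite{aroser} and used as a black box, the only addition being the remark immediately following it on the explicit form of the constant. Your Moser iteration outline is correct and is essentially the method of the original Aronson--Serrin paper: in particular your identification of the crucial point---that the strict inequality $q>\frac{n+2}{2}$ renders the potential term subcritical for the parabolic Sobolev embedding, so it can be absorbed at each step with constants polynomial in the exponent $\gamma$ and the iteration closes---is exactly the mechanism behind their estimate.
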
 

Inspecting carefully the proof and keeping track of the dependencies of the constants, it follows that the constant $C$ can be made more explicit, that is, 
$$
C=C(n)\rho^{1-\frac{n+2}{2q}}\Vert d \Vert_{q,3\rho}^{1/2},
$$
hence, in the same hypotheses of this theorem, we can conclude
$$
|u(x,t)| \leq C \rho^{1-\frac{n+2}{2q}-\frac{n+2}{2}}\Vert d \Vert_{q,3\rho}^{1/2}\, \Vert u \Vert_{2,3\rho}\leq C \rho \, |\widetilde{Q}(3\rho)|^{-\frac{q+1}{2q}} \Vert d \Vert_{q,3\rho}^{1/2}\, \Vert u \Vert_{2,3\rho},
$$
for every $(x,t)\in \widetilde{Q}(\rho)$, with $C=C(n)$, as $|\widetilde{Q}(3\rho)| =(3 \rho)^2 |B(\overline{x},3\rho)|=\omega_n 3^{n+2} \rho^{n+2}$ is the Lebesgue measure of the parabolic cylinder $\widetilde{Q}(3\rho)$.

The same estimate can be extended to Riemannian manifolds $(M,g)$ with nonnegative Ricci tensor, see~\cite[Theorem~5.6]{saloff} (with $K=0$), obtaining
\begin{equation*}
|u(x,t)| \leq C \rho^{-\frac1q} \, \mu(B(\overline{x},3\rho))^{-\frac{q+1}{2q}} \Vert d \Vert_{q,3\rho}^{1/2}\, \Vert u \Vert_{2,3\rho},
\end{equation*}
where $C=C(n,M)$ and $B(\overline{x},\rho)\subseteq M$ is the metric ball with center $\overline{x}$ and radius $\rho$ and $\mu$ the canonical Riemannian measure of $(M,g)$.

Setting then $d=u^{p-1}$ in this estimate we get the following corollary.

\begin{corollary}\label{t-aroserc}
Let $(M,g)$ be a complete Riemannian manifold with nonnegative Ricci tensor. Let $u\in C^{2,1}(M\times\RR)$ be a nonnegative solution of equation $u_t=\Delta u+u^p$ with $p>1$, then for any $q>\frac{n+2}{2}$ there exists a positive constant $C=C(n,M)>0$ such that in $\widetilde{Q}(\rho)$ we have the estimate,
$$
u(x,t) \leq C \rho^{-\frac1q} \, \mu(B(\overline{x},3\rho))^{-\frac{q+1}{2q}} \Vert u \Vert_{q(p-1),3\rho}^{\frac{p-1}{2}}\, \Vert u \Vert_{2,3\rho}
$$
for every $\rho>0$.
\end{corollary}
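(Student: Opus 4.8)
The plan is to obtain the corollary as a direct specialization of the Saloff--Coste estimate displayed immediately above its statement, applied with the coefficient $d=u^{p-1}$. With this choice the equation $u_t=\Delta u+u^p$ reads $u_t=\Delta u+du$, so that estimate applies; and since $u\in C^{2,1}(M\times\RR)$ is an eternal classical solution, the inclusion $\widetilde Q(3\rho)\subseteq M\times\RR$ holds for every $\rho>0$, while $d=u^{p-1}$ is continuous and hence lies in $L^q(\widetilde Q(3\rho))$ for every $q>\frac{n+2}{2}$. Thus the hypotheses are met at every scale $\rho>0$.

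The only computation needed is the elementary identity for $L^q$--norms of powers,
\[
\Vert d\Vert_{q,3\rho}=\Bigl(\int_{\widetilde Q(3\rho)}u^{(p-1)q}\Bigr)^{1/q}=\Vert u\Vert_{q(p-1),3\rho}^{\,p-1},
\]
so that $\Vert d\Vert_{q,3\rho}^{1/2}=\Vert u\Vert_{q(p-1),3\rho}^{(p-1)/2}$. Substituting this into
\[
|u(x,t)|\le C\rho^{-\frac1q}\,\mu(B(\overline x,3\rho))^{-\frac{q+1}{2q}}\,\Vert d\Vert_{q,3\rho}^{1/2}\,\Vert u\Vert_{2,3\rho}
\]
and using $u\ge 0$ to drop the modulus on the left yields exactly the asserted inequality in $\widetilde Q(\rho)$, with the same constant $C=C(n,M)$.

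I do not expect any genuine obstacle: the corollary is a bookkeeping restatement, and whatever difficulty is present lies entirely in the earlier step of tracking the explicit dependence of the Aronson--Serrin constant on $\Vert d\Vert_q$ and its transfer to manifolds via Theorem~5.6 of~\cite{saloff}, which is already carried out in the paragraphs preceding the corollary. The single point worth keeping in mind is that the strict inequality $q>\frac{n+2}{2}$ must be retained, since it is what places $d$ in the admissible range for Lemma~3 of~\cite{aroser}; this is why the same restriction appears in the statement of the corollary.
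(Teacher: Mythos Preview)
Your proposal is correct and follows exactly the paper's approach: the paper simply states ``Setting then $d=u^{p-1}$ in this estimate we get the following corollary,'' and your write-up is nothing more than a careful unpacking of that substitution, including the norm identity $\Vert d\Vert_{q,3\rho}=\Vert u\Vert_{q(p-1),3\rho}^{\,p-1}$.
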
 

We are then ready to show Theorem~\ref{t-triv}.

\begin{proof}[Proof of Theorem~\ref{t-triv}] Let $(M,g)$ be a complete Riemannian manifold with nonnegative Ricci tensor. Let $u\in C^{2,1}(M\times\RR)$ be a bounded eternal solution of equation~\eqref{eq-pde} such that $u_t \geq 0$, with $1<p<p_S=\frac{n+2}{n-2}$ and $n\geq 3$, then, by Theorem~2.6 in ~\cite{cama2}, the function $u$ is identically zero or positive everywhere. The conclusion in the case $1<p<p_{BV}=\frac{n(n+2)}{(n-1)^2}$ is a consequence of the estimates in~\cite{bi-ve} (see Theorem~A in~\cite{pqs}) for the Euclidean setting, which can be extended to Riemannian manifolds with nonnegative Ricci tensor as observed in Remark~\ref{rem-bv}, in the introduction. Thus, we assume 
$$
\frac{n(n+2)}{(n-1)^2}\leq p<p_S.
$$
We choose $m=-\frac{d}{n-1}>-\frac{2}{n-2}$, as in Proposition~\ref{p-lemma3} and we observe that 
$$
2p+m\geq\frac{2n(n+2)}{(n-1)^2}-\frac{2}{n-2}=2\frac{n^3-n^2-2n-1}{(n-2)(n-1)^2}=2+\frac{2(3n^2-7n+1)}{(n-2)(n-1)^2}>2
$$
when $n\geq 3$.\\
Now let $\gamma>0$ such that 
\begin{equation}\label{eq-c1}
q=(1+\gamma)\frac{2p+m}{p-1}>\frac{n+2}{2}
\end{equation}
and 
\begin{equation}\label{eq-c2}
1+\frac{2(1+\gamma)}{1+2\gamma}>p,
\end{equation}
(we will discuss the existence of such $\gamma$ later on).\\
It follows that  
$$ 
\lim_{\delta\to0^+}\Bigl(1-\frac{p-1}{2(1+\gamma)(p-1+\delta)}-\frac{1}{p-1+\delta}\Bigr)={\frac{1+2\gamma}{2(1+\gamma)}-\frac{1}{p-1}}<0,
$$
hence, the quantity 
\begin{equation}\label{epsform}
\sigma=1-\frac{p-1}{2(1+\gamma)(p-1+\delta)}-\frac{1}{p-1+\delta}
\end{equation}
is negative, choosing $\delta>0$ sufficiently small.\\
Now, fixing $(\overline{x}, \overline{t})\in M\times\RR$ and $\rho>0$, from Holder's inequality and Proposition~\ref{p-intdec-cor} (notice that $\widetilde{Q}(3\rho)\subseteq B(\overline{x},3\rho)\times[\overline{t}-9\rho^2,\overline{t}+9\rho^2]$), we obtain
\begin{align}\nonumber
\Vert u \Vert_{2,3\rho}=&\, \Bigl( \int_{\widetilde{Q}(3\rho)} u^2 \Bigr)^{1/2}\\
\leq&\,\left(\rho^2\mu(B(\overline{x},3\rho))\right)^{\frac{2p+m-2}{2(2p+m)}}\Bigl( \int_{\widetilde{Q}(3\rho)} u^{2p+m} \Bigr)^{\frac{1}{2p+m}}\nonumber\\
\leq&\, C \left(\rho^2\mu(B(\overline{x},3\rho))\right)^{1/2}\rho^{-\frac{1}{p-1+\delta}}\label{eq-L2}
\end{align}
for every $\rho>0$.\\
Then, using the fact that $u$ is bounded and $\gamma>0$, again by Holder's inequality and Proposition~\ref{p-intdec-cor} we have
\begin{align}\nonumber
\Vert u\Vert_{q(p-1),3\rho}^{p-1}
=&\,\Bigl( \int_{\widetilde{Q}(3\rho)} u^{q(p-1)} \Bigr)^{1/q}\nonumber\\
=&\, \Bigl( \int_{\widetilde{Q}(3\rho)} u^{(1+\gamma)(2p+m)} \Bigr)^{1/q}\nonumber\\
\leq&\, C \Bigl( \int_{\widetilde{Q}(3\rho)} u^{2p+m} \Bigr)^{1/q}\nonumber\\
\leq&\, C \left(\rho^2\mu(B(\overline{x},3\rho))\right)^{1/q}\rho^{-\frac{p-1}{(1+\gamma)(p-1+\delta)}}.\label{eq-Lq}
\end{align}
By Corollary~\ref{t-aroserc}, since $q>\frac{n+2}{2}$, there exists a positive constant $C=C(n,M)$ such that in $\widetilde{Q}(\rho)$ we have
\begin{equation*}
u(x,t) \leq C \rho^{-\frac1q} \, \mu(B(\overline{x},3\rho))^{-\frac{q+1}{2q}} \Vert u \Vert_{q(p-1),3\rho}^{\frac{p-1}{2}}\, \Vert u \Vert_{2,3\rho}
\end{equation*}
hence, by estimates~\eqref{eq-L2} and~\eqref{eq-Lq}, we obtain that for every $(x,t)\in \widetilde{Q}(\rho)$ there holds
\begin{align*}
u(x,t) \leq&\, C \rho^{-\frac1q} \, \mu(B(\overline{x},3\rho))^{-\frac{q+1}{2q}} \Vert u \Vert_{q(p-1),3\rho}^{\frac{p-1}{2}}\, \Vert u \Vert_{2,3\rho}\\
\leq&\, C \rho^{-\frac1q} \, \mu(B(\overline{x},3\rho))^{-\frac{q+1}{2q}} \left(\rho^2\mu(B(\overline{x},3\rho))\right)^{\frac{1}{2q}}\rho^{-\frac{p-1}{2(1+\gamma)(p-1+\delta)}} \left(\rho^2\mu(B(\overline{x},3\rho))\right)^{\frac12}\rho^{-\frac{1}{p-1+\delta}} \\
=&\,C \rho^{1-\frac{p-1}{2(1+\gamma)(p-1+\delta)}-\frac{1}{p-1+\delta}}\\
=&\,C\rho^{\sigma}
\end{align*}
where $\sigma<0$ is defined by formula~\eqref{epsform}.\\
Since $u$ is nonnegative, letting $\rho\to+\infty$, we obtain $u(x,t)=0$ for every $(x,t)\in M \times(-\infty,\overline{t})$ and being $\overline{t}\in\RR$ arbitrary, $u=0$ everywhere.

Finally, one can check that, if $n\geq 5$ and $p<p_S$, there exists $\gamma>0$ such that conditions~\eqref{eq-c1} and~\eqref{eq-c2} are satisfied. In fact, in order to satisfy condition~\eqref{eq-c1}, thanks to the fact that $2p+m \geq 2$ and $p < \frac{n+2}{n-2}$, for any $n \geq 3$ we can choose a $\gamma>0$ such that 
\begin{equation*}
\gamma > \frac{n+2}{4} \left( \frac{n+2}{n-2} - 1 \right) -1 = \frac{4}{n-2} .
\end{equation*}
On the other hand, to fulfill condition~\eqref{eq-c2} it is sufficient to choose $\gamma$ that satisfies
\begin{equation*}
\frac{1}{1+2\gamma} > p-2,
\end{equation*}
which, for $p < \frac{n+2}{n-2}$ and $n \geq 6$, is true for any $\gamma >0$. Finally, for $n=5$, it is sufficient to choose $\gamma=1$. 
\end{proof}

\bibliographystyle{amsplain}
\bibliography{biblio}

\end{document}